\theoremstyle{definition}
\newtheorem{theorem}{Theorem}[section]
\newtheorem{definition}[theorem]{Definition}
\newtheorem{proposition}[theorem]{Proposition}
\newtheorem{lemma}[theorem]{Lemma}
\newtheorem{remark}[theorem]{Remark}
\newtheorem{corollary}[theorem]{Corollary}
\numberwithin{equation}{section}
\renewcommand\div{\operatorname{div}}
\newcommand{\wt}{\widetilde}
\newcommand{\pr}{\partial}
\newcommand{\ol}{\overline}
\newcommand{\Lap}{\Delta}
\renewcommand*\d{\mathop{}\!\mathrm{d}}
\title[Moving-centre monotonicity formulae]{Moving-centre monotonicity formulae for minimal submanifolds and related equations}
\author{Jonathan J. Zhu}%
\address{Department of Mathematics,
Harvard University, Cambridge, MA 02138, USA}
\email{jjzhu@math.harvard.edu}
\begin{document}
\date{\today}
\maketitle

\begin{abstract}
Monotonicity formulae play a crucial role for many geometric PDEs, especially for their regularity theories. For minimal submanifolds in a Euclidean ball, the classical monotonicity formula implies that if such a submanifold passes through the centre of the ball, then its area is at least that of the equatorial disk. Recently Brendle and Hung proved a sharp area bound for minimal submanifolds when the prescribed point is not the centre of the ball, which resolved a conjecture of Alexander, Hoffman and Osserman. Their proof involves asymptotic analysis of an ingeniously chosen vector field, and the divergence theorem.

In this article we prove a sharp `moving-centre' monotonicity formula for minimal submanifolds, which implies the aforementioned area bound. We also describe similar moving-centre monotonicity formulae for stationary $p$-harmonic maps, mean curvature flow and the harmonic map heat flow. 
\end{abstract}

\section{Introduction}

For many geometric partial differential equations, monotonicity formulae play an essential role and their discovery often leads to deep and fundamental results for those systems. Monotonicity is a particularly useful tool in the study of variational problems, and for regularity theory (see for example \cite{almgren, bartnik, CMmono, evans, EWW, simon, white} and references therein). These formulae often control the evolution of energy-type quantities with respect to changes in scale, or time.

An important example is the classical monotonicity formula for minimal submanifolds - critical points of the area functional - which states:

\begin{proposition}
Let $\Sigma^k$ be a minimal submanifold in $\mathbb{R}^n$. Then so long as $\pr\Sigma \cap \overline{B_r^n} = \emptyset$, we have 
\begin{equation}
\label{eq:intromin}
\frac{d}{dr} \left(r^{-k} |\Sigma \cap B_r^n |\right) = r^{-k-1} \int_{\Sigma\cap \pr B^n_r} \frac{|x^\perp|^2}{|x^T|}  \geq 0.
\end{equation}
\end{proposition}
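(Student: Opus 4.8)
The natural tool is the first variation formula for area combined with minimality. Because $\Sigma$ is minimal its mean curvature vanishes, so for any vector field $X$ compactly supported in the open ball $B_r^n$ --- which is disjoint from $\pr\Sigma$ by hypothesis --- the first variation formula produces no boundary term and reads
\begin{equation*}
\int_\Sigma \div_\Sigma X = 0,
\end{equation*}
where the integral is taken with respect to $k$-dimensional Hausdorff measure. The plan is to test this with the radial field $X = \phi(|x|)\,x$, with $\phi$ a cutoff depending only on $r = |x|$, and to read off \eqref{eq:intromin} from the resulting identity.

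First I would compute the tangential divergence of this field. Using $\div_\Sigma x = k$ (the position vector has tangential divergence equal to the dimension) together with the elementary identity $\nabla^\Sigma r = x^T/r$, where $x^T$ is the tangential part of $x$, the Leibniz rule gives
\begin{equation*}
\div_\Sigma\!\left(\phi(r)\,x\right) = k\,\phi(r) + \phi'(r)\,\frac{\langle x^T, x\rangle}{r} = k\,\phi(r) + \phi'(r)\left(r - \frac{|x^\perp|^2}{r}\right),
\end{equation*}
since $\langle x^T, x\rangle = |x^T|^2 = r^2 - |x^\perp|^2$. Integrating and using minimality yields the key identity
\begin{equation*}
k\int_\Sigma \phi(r) + \int_\Sigma r\,\phi'(r) = \int_\Sigma \phi'(r)\,\frac{|x^\perp|^2}{r}.
\end{equation*}

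To extract the monotonicity formula I would let $\phi$ approximate the indicator $\mathbf{1}_{[0,r]}$, so that $\phi'$ concentrates as a negative spike on the sphere $\pr B_r^n$; for $\phi$ supported in a slightly larger ball still missing $\pr\Sigma$ the test field remains admissible. Writing $A(r) = |\Sigma\cap B_r^n|$ and applying the coarea formula with $|\nabla^\Sigma r| = |x^T|/r$, as the regularization is removed the three terms converge to $kA(r)$, to $-\int_{\Sigma\cap\pr B_r^n}\tfrac{r^2}{|x^T|} = -r A'(r)$, and to $-\int_{\Sigma\cap\pr B_r^n}\tfrac{|x^\perp|^2}{|x^T|}$ respectively. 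Rearranging gives $r A'(r) - k A(r) = \int_{\Sigma\cap\pr B_r^n}\tfrac{|x^\perp|^2}{|x^T|}$, and since $\frac{d}{dr}(r^{-k}A(r)) = r^{-k-1}\big(rA'(r) - kA(r)\big)$ this is precisely \eqref{eq:intromin}; nonnegativity is then immediate because the integrand is manifestly nonnegative.

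The divergence computation and the Leibniz rule are routine. The step requiring the most care is the limiting argument: one must justify differentiating $A(r)$ through the coarea formula and check that the concentrating $\phi'$ reproduces exactly the two boundary integrals, keeping the $r\phi'$ and $\phi'|x^\perp|^2/r$ contributions cleanly separated. This is where the sharp coefficients of the formula are pinned down.
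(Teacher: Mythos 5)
Your proof is correct and is essentially the same argument as the paper's: the proposition is recovered in the paper as the $y=0$ case of Theorem \ref{thm:ocmmin}, whose proof rests on exactly the same two ingredients you use, namely the first variation formula applied to a vector field with $\div_\Sigma X = k$ (the position field, since $X_s = x-y-sy$ reduces to $x$ when $y=0$) and the coarea formula via $|\nabla |x|| = |x^T|/|x|$. The only difference is presentational: you implement the localization by smoothing the indicator $\mathbf{1}_{[0,r]}$ and letting $\phi'$ concentrate on $\pr B_r^n$, whereas the paper applies the divergence theorem directly on the sublevel set with boundary term $\langle x, x^T/|x^T|\rangle = |x^T|$ --- both routes land on the same identity $rA'(r)-kA(r)=\int_{\Sigma\cap\pr B_r^n} |x^\perp|^2/|x^T|$, and your cutoff version has the mild additional virtue, noted in the paper's remarks, of extending verbatim to stationary varifolds.
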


Here $B_r^n=B^n(0,r)$ denotes the Euclidean ball of radius $r$ about the origin in $\mathbb{R}^n$. Thus the area ratio $r^{-k}|\Sigma\cap B_r^n|$ is monotone on balls with fixed centre, and so comparing to the limiting density as $r\searrow 0$ yields that any minimal submanifold $\Sigma^k\subset B_r^n$ with $\pr\Sigma \subset \pr B_r^n$, which passes through the origin, satisfies the sharp area bound 
\begin{equation}
\frac{|\Sigma \cap B_r^n|}{r^k} \geq |B_1^k| , 
\end{equation}
with equality if and only if $\Sigma$ is a flat $k$-disk. 

In the case that the minimal submanifold $\Sigma^k \subset B_r^n$ does not necessarily pass through the centre of the ball, Alexander, Hoffman and Osserman \cite{AHO} conjectured (see also \cite{osserman}) the following sharp area bound, which has recently been proven in full generality by Brendle and Hung \cite{brendlehung} (see also Corollary \ref{thm:brendlehung}). %
Alexander and Osserman had previously proven the conjecture only in the case of simply connected surfaces \cite{AO}. 

\begin{theorem}[\cite{brendlehung}]
\label{thm:introbrendlehung}
Let $\Sigma^k$ be a minimal submanifold in the ball $B_r^n $ with $\pr \Sigma \subset \pr B_r^n$. Then \begin{equation}
\label{eq:introbh}
\frac{|\Sigma\cap B_r^n|}{(r^2-d^2)^\frac{k}{2}} \geq |B^k_1|,\end{equation} where $d=d(0,\Sigma)$ is the distance from $\Sigma$ to the centre of the ball. 
\end{theorem}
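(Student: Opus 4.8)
The plan is to upgrade the classical formula \eqref{eq:intromin} to a sharp \emph{moving-centre} monotonicity formula and then read off \eqref{eq:introbh} by comparing its two endpoints. I would first reduce to the nearest point. If $d=0$ the bound is \eqref{eq:intromin} itself, so assume $0<d<r$ and let $p\in\Sigma$ realise $d=|p|=d(0,\Sigma)$; since $d<r$ and $\pr\Sigma\subset\pr B_r^n$, the function $|x|^2$ attains its minimum over $\Sigma$ at the interior point $p$, so $p^T=0$, i.e. $p\perp T_p\Sigma$. In fact I would prove the stronger statement $|\Sigma\cap B_r^n|\ge|B_1^k|\,(r^2-|p|^2)^{k/2}$ for \emph{every} interior point $p\in\Sigma$, and then take $p$ nearest, which maximises $r^2-|p|^2$ and yields the theorem.

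Next I would replace the concentric balls of \eqref{eq:intromin} by the moving-centre spheres adapted to $p$: the coaxial (Apollonian) family of level sets $\{t=\text{const}\}$ of $t(x)=|x-p|/|x-p^\ast|$, with inverse point $p^\ast=\tfrac{r^2}{d^2}p$; equivalently these are the geodesic spheres about $p$ in the Poincar\'e-ball metric on $B_r^n$. A direct computation gives $t\equiv d/r$ on $\pr B_r^n$ and $t\to0$ at $p$, so this family interpolates between $\{p\}$ and $B_r^n$ while its Euclidean centres slide along the axis through $p$. I would then introduce the adapted radial function
\[
\rho(x)=\frac{r^2-d^2}{d}\,\frac{t(x)}{\sqrt{1-t(x)^2}},\qquad \Omega_\sigma=\{x:\rho(x)<\sigma\},
\]
whose values run over $(0,\sqrt{r^2-d^2}\,]$, with $\Omega_{\sqrt{r^2-d^2}}=B_r^n$. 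This reparametrisation is dictated by the model case: for the flat $k$-disk $D$ through $p$ lying in $\{\langle x,e\rangle=d\}$, where $e=p/d$, one computes $\rho=|x-p|$ on $D$, so that $\sigma^{-k}|D\cap\Omega_\sigma|\equiv|B_1^k|$ is constant.

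The heart of the matter is the claim that the moving-centre area ratio $M(\sigma)=\sigma^{-k}|\Sigma\cap\Omega_\sigma|$ is monotone nondecreasing for every minimal $\Sigma$. I would prove this by the first-variation/divergence-theorem method: differentiating $M$ and applying the coarea formula reduces monotonicity to $\sigma A'(\sigma)\ge k A(\sigma)$ for $A(\sigma)=|\Sigma\cap\Omega_\sigma|$. To control $kA(\sigma)$ I would test stationarity against the conformal Killing field $X$ whose flow generates the Apollonian family fixing $p$ and $p^\ast$ (the conjugate, under the M\"obius map $F$ with $F(p)=0$, of the dilation field at the origin). Since $\Sigma$ is minimal, $\int_{\Sigma\cap\Omega_\sigma}\div_\Sigma X=\int_{\Sigma\cap\{\rho=\sigma\}}\langle X,\eta\rangle$ with no mean-curvature term, and because $X$ is conformal Killing one has $\div_\Sigma X=k\lambda$ for its conformal factor $\lambda$ (for the dilation field $x$ this is the classical $\div_\Sigma x=k$). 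Monotonicity then amounts to showing that this identity, the coarea term and the reparametrisation collapse to the integral over $\{\rho=\sigma\}$ of a manifestly nonnegative, $|x^\perp|^2$-type perfect square, in exact analogy with the term $|x^\perp|^2/|x^T|$ of \eqref{eq:intromin}. This is the main obstacle: one must check that the conformal weight $\lambda$, the geometry of the moving spheres and the normalisation conspire to leave a clean nonnegative remainder. This is the algebraic miracle underlying the ingeniously chosen vector field of \cite{brendlehung} (which in their one-shot argument is further arranged to vanish on $\pr B_r^n$, so as to discard the boundary term on $\pr\Sigma$); the point of the present approach is to recognise that it packages into a genuine monotonicity formula.

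Finally I would extract \eqref{eq:introbh} from the endpoints of $M$. As $\sigma\searrow0$ one has $\rho(x)=|x-p|+o(|x-p|)$, so $\Omega_\sigma$ shrinks to Euclidean balls about the smooth interior point $p$ and $M(0^+)=|B_1^k|\,\Theta(p)\ge|B_1^k|$ by the local density lower bound coming from \eqref{eq:intromin} centred at $p$. At the other end $\Omega_{\sqrt{r^2-d^2}}=B_r^n$, so $M(\sqrt{r^2-d^2})=(r^2-d^2)^{-k/2}|\Sigma\cap B_r^n|$. Monotonicity then gives $(r^2-d^2)^{-k/2}|\Sigma\cap B_r^n|\ge|B_1^k|$, which is \eqref{eq:introbh}. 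Tracking the perfect-square term through the equality case forces $\Sigma$ to be invariant under the Apollonian flow and hence totally geodesic, so equality holds precisely for the flat equatorial disk $D$.
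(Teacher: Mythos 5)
Your reduction and your choice of geometry are exactly right --- in fact they coincide with the paper's, not merely parallel it. One checks that $y$ and the inverse point $y^*=y/|y|^2$ are inverse points with respect to every sphere $\pr E_s$ of Definition \ref{defn:Eballs}: both $y-(1-s)y=sy$ and $y^*-(1-s)y$ are positive multiples of $y$, and $|y-(1-s)y|\,|y^*-(1-s)y|=s(1-|y|^2)+s^2|y|^2=r(s)^2$. So the paper's balls $E_s$ are precisely your Apollonian (hyperbolic geodesic-sphere) family about $p$, and a short computation gives $\rho^2=(1-|y|^2)f$ with $f$ as in Definition \ref{defn:Eballs} (after scaling to $r=1$, $p=y$). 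Hence your $M(\sigma)=\sigma^{-k}|\Sigma\cap\Omega_\sigma|$ is literally the paper's monotone quantity $\left(s(1-|y|^2)\right)^{-k/2}|\Sigma\cap E_s|$, and your endpoint analysis ($M(0^+)=|B_1^k|\,\Theta(p)\geq|B_1^k|$ at the interior point, $\Omega_{\sqrt{r^2-d^2}}=B_r^n$ at the outer end) is exactly the paper's deduction of Corollary \ref{thm:brendlehung} from Theorem \ref{thm:ocmmin}. (The nearest-point reduction and the observation $p^T=0$ are superfluous: as you note, the bound holds for every interior $p\in\Sigma$.)

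The genuine gap is exactly where you flag ``the main obstacle'': the monotonicity --- the entire content of the theorem --- is never proved, and the mechanism you propose does not produce the reduction you state. For a conformal Killing field $X$ generating the M\"obius flow fixing $p$ and $p^*$, one has $\div_\Sigma X=k\lambda$ with a genuinely nonconstant conformal factor $\lambda$, so minimality and the divergence theorem give $k\int_{\Sigma\cap\Omega_\sigma}\lambda=\int_{\Sigma\cap\{\rho=\sigma\}}\langle X,\eta\rangle$, which is \emph{not} the needed $kA(\sigma)$; the discrepancy $k\int_{\Sigma\cap\Omega_\sigma}(\lambda-1)$ has no sign, and one cannot instead conjugate (\ref{eq:intromin}) through the M\"obius map, since M\"obius maps do not preserve Euclidean minimality. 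The paper's resolution is different and elementary: for each \emph{fixed} level $s$ it applies the divergence theorem to the affine field $X_s=x-y-sy$ --- a different field at each scale, a translate of the position field rather than a single conformal field --- which satisfies $\div_\Sigma X_s\equiv k$ exactly, so the first variation yields precisely $k|\Sigma\cap E_s|$. Combined with the gradient identity $\frac{Df}{2f}=\frac{x-y+fy}{|x-y|^2}$ of Lemma \ref{lem:mindf} (which is where the choice of $r(s)$ is forced) and the polarisation $\langle X_s,(x-y+sy)^T\rangle=|(x-y)^T|^2-s^2|y^T|^2$, the remainder collapses to the manifestly nonnegative $\frac{|(x-y)^\perp|^2+s^2|y^T|^2}{|x-y|^2}$. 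So the ``algebraic miracle'' you hope for is a two-line polarisation once the conformal field is abandoned in favour of the $s$-dependent translate of the dilation field; with that substitution your outline becomes the paper's proof, but as written the central inequality $\sigma A'(\sigma)\geq kA(\sigma)$ remains unestablished.
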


The proof of Theorem \ref{thm:introbrendlehung} by Brendle-Hung involves the choice of a clever, but somewhat geometrically mysterious, vector field $W$. They apply the divergence theorem to $W$ away from small balls $B_\epsilon(y)$, where $y\in \Sigma \cap B_r$, and obtain the estimate in the limit as $\epsilon\rightarrow 0$. 

In this paper, we show that the area bound (\ref{eq:introbh}) in fact arises from a sharp `moving-centre' monotonicity formula, in which the centres of the extrinsic balls are allowed to move, and the scale is adjusted in a particular manner:
 
\begin{definition}
Fix $y\in B_R^n$. For $s\geq 0$ we let \begin{equation}E_s = B^n((1-s)y, r(s))\end{equation} denote the ball with centre $(1-s)y$ and radius $r(s) := \sqrt{s(R^2-|y|^2) + s^2|y|^2}.$
\end{definition}

Our main theorem is then as follows (see also Theorem \ref{thm:ocmmin}):

\begin{theorem}
\label{thm:intromain}
Let $\Sigma^k$ be a minimal submanifold in $\mathbb{R}^n$ and $y \in B_R^n, E_s, r(s)$ be as above. Then so long as $\pr \Sigma \cap \overline{E_s} =\emptyset$, we have 
\begin{equation}
\frac{d}{ds}\left( \frac{|\Sigma \cap E_s|}{(r(s)^2 - d(s)^2)^\frac{k}{2}}\right) =\frac{s^{-\frac{k+2}{2}}}{2(R^2-|y|^2)^\frac{k}{2}} \int_{\Sigma \cap \pr E_s} \frac{|(x-y)^\perp|^2+s^2|y^T|^2}{|(x-y+sy)^T|} \geq 0 , 
\end{equation}
where $d(s)=s|y|$ is the distance from $y$ to the centre of the ball $E_s$. 
\end{theorem}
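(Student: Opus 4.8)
The plan is to recognise the stated quantity as a normalised area and to differentiate it by combining a coarea computation with the first variation formula for the minimal $\Sigma$.

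First I would simplify the normalisation. Since $d(s)^2 = s^2|y|^2$, the radius satisfies $r(s)^2 - d(s)^2 = s(R^2 - |y|^2)$, so that, writing $V(s) := |\Sigma\cap E_s|$, the quantity to be differentiated is simply $V(s)\,s^{-k/2}(R^2-|y|^2)^{-k/2}$. Pulling out the constant $(R^2-|y|^2)^{-k/2}$ and matching powers of $s$, the theorem reduces to the single identity
\begin{equation*}
s\,V'(s) - \tfrac{k}{2}V(s) = \tfrac12\int_{\Sigma\cap\pr E_s}\frac{|(x-y)^\perp|^2 + s^2|y^T|^2}{|(x-y+sy)^T|}.
\end{equation*}
It is convenient to record that, with $\phi(x,s) := |x-(1-s)y|^2 - r(s)^2$, a direct expansion gives the interpolation $\phi(x,s) = (1-s)|x-y|^2 + s(|x|^2-R^2)$; thus $E_s=\{\phi(\cdot,s)<0\}$ is an increasing family, sweeping from the point $y$ at $s=0$ to the ball $B_R^n$ at $s=1$, and may be written as the sublevel sets $\{\rho<s\}$ of the fixed function $\rho = |x-y|^2/(R^2+|y|^2-2\langle x,y\rangle)$.

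Next I would differentiate $V$. Writing $z := x-(1-s)y = x-y+sy$, one has $\nabla^{\mathbb{R}^n}\phi = 2z$, hence $\nabla^\Sigma\phi = 2z^T$ and the outward conormal of $\Sigma\cap\pr E_s$ in $\Sigma$ is $\eta = z^T/|z^T|$. The coarea formula applied to $\rho$ (equivalently, the transport theorem for the moving domain) yields $V'(s) = \int_{\Sigma\cap\pr E_s}(-\pr_s\phi)/|\nabla^\Sigma\phi|$. Since $\pr_s\phi = |x|^2 - R^2 - |x-y|^2$ and $\phi=0$ on $\pr E_s$ forces $s(|x|^2-R^2) = -(1-s)|x-y|^2$, I obtain the clean boundary identity $s\,\pr_s\phi = -|x-y|^2$, and therefore $s\,V'(s) = \tfrac12\int_{\Sigma\cap\pr E_s}|x-y|^2/|z^T|$. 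For the zeroth-order term I would apply the first variation formula: because $\Sigma$ is minimal and $\pr\Sigma\cap\ol{E_s}=\emptyset$, the divergence theorem for the radial field $z$ — for which $\div_\Sigma z = k$ and $\langle z,\eta\rangle = |z^T|$ — gives $k\,V(s) = \int_{\Sigma\cap\pr E_s}|z^T|$. Subtracting,
\begin{equation*}
s\,V'(s) - \tfrac{k}{2}V(s) = \tfrac12\int_{\Sigma\cap\pr E_s}\frac{|x-y|^2 - |z^T|^2}{|z^T|}.
\end{equation*}

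It remains to reconcile this with the manifestly nonnegative integrand in the statement, and this is the one genuinely delicate point. Using $x-y = z-sy$ and splitting into parts tangent and normal to $\Sigma$ gives $|x-y|^2 - |z^T|^2 = |(x-y)^\perp|^2 + s^2|y^T|^2 - 2s\langle z^T, y^T\rangle$. The spurious cross term, divided by $|z^T|$, is exactly $-2s\langle\eta,y\rangle$; applying the first variation formula once more to the \emph{constant} field $y$, for which $\div_\Sigma y = 0$, shows $\int_{\Sigma\cap\pr E_s}\langle y,\eta\rangle = 0$, so the cross term integrates to zero and the claimed identity follows. Equivalently, one could run the divergence-theorem step with the shifted field $x-(1+s)y = z - 2sy$ in place of $z$, which produces the sharp integrand directly. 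In either formulation the integrand is a sum of squares over $|z^T|$, so the derivative is nonnegative, giving the monotonicity. \textbf{The main obstacle} I anticipate is twofold: justifying the differentiation of the moving-domain area $V(s)$ rigorously — handled cleanly via the fixed sublevel description $E_s=\{\rho<s\}$ and coarea — and recognising that the naive radial computation yields an integrand differing from the sharp one by a term that integrates to zero, namely the flux $\langle\eta,y\rangle$ of the constant field $y$, whose vanishing is precisely an expression of the minimality of $\Sigma$.
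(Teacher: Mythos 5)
Your proposal is correct and follows essentially the same route as the paper's proof of Theorem \ref{thm:ocmmin}: the coarea formula for the sublevel-set function (your $\rho$ is the paper's $f$, and your boundary identity $s\,\pr_s\phi=-|x-y|^2$ is equivalent to Lemma \ref{lem:mindf}) combined with the divergence theorem for an $s$-dependent position-type field on the minimal $\Sigma$. Your two-step variant — flux of $z=x-y+sy$ plus the vanishing flux of the constant field $y$ — is, as you note yourself, identical to the paper's single choice $X_s = x-y-sy$, which produces the sharp integrand $\bigl(|(x-y)^\perp|^2+s^2|y^T|^2\bigr)/|(x-y+sy)^T|$ directly.
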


As $s\searrow 0$, the monotone quantity \begin{equation}(r(s)-d(s))^{-\frac{k}{2}}|\Sigma\cap E_s|\end{equation} again picks up the density at $y$. On the other hand, $r(1)=R$, so taking $y\in\Sigma\cap B_1 = \Sigma \cap E_1$ to be the point with $|y|=d(0,\Sigma)$ and comparing $s=0$ to $s=1$ indeed yields the area bound (\ref{eq:introbh}). 

This new proof using our monotonicity formula thus offers a new perspective on Theorem \ref{thm:introbrendlehung}. In fact, the choice of radius $r(s)$ above is constrained at a technical level in the proof of the monotonicity Theorem \ref{thm:intromain} (see Lemma \ref{lem:mindf}); with this choice in hand one may then observe that Brendle-Hung's vector field $W$ takes a relatively simple form in terms of a function $f$ whose sub-level sets are the balls $E_s$. Interestingly, this function also seems to emerge organically in the study of the exponential transform \cite{tkachev}. %

Moreover, the distinction between having an area bound compared to having monotonicity of an area ratio is significant (although potentially subtle). For example, for minimal submanifolds $\Sigma^k \subset N^n$, where $N$ has sectional curvature bounded above by $K$, it holds that \begin{equation}\frac{|\Sigma^k \cap \bar{B}^{N}_r|}{A_0(r)} \geq 1,\end{equation} where $\bar{B}^N_r$ is the geodesic ball in $N$ of radius $r$, and $A_0(r)$ is the area of the geodesic ball of radius $r$ in the $k$-dimensional space form of curvature $K$. (If $K>0$ then $r$ must be less than $\frac{\pi}{2\sqrt{K}}$.) For $K\leq 0$, the quantity on the left is in fact monotone non-decreasing \cite{anderson}, but for $K>0$, such a monotonicity for the area ratio is not known. Instead, one can prove the area bound by proving a monotonicity for the quantity $A_0(r)^{-1} \int_{\bar{B}_r^N} |\nabla \rho|^2$, where $\rho$ is the distance function on $N$ \cite{gulliverscott}. 

In \cite{AHO} it was shown that Theorem \ref{thm:introbrendlehung} would be a consequence of the sharp isoperimetric inequality $|\pr \Sigma|^k \geq k^k |B_1^k| |\Sigma|^{k-1}$ for minimal submanifolds $\Sigma^k$, although the latter is not presently known in general. In fact, isoperimetric inequalities are closely related to monotonicity properties of minimal submanifolds, particularly in the work of Choe (see \cite{choe05} for a survey). One might therefore hope that the approaches to Theorems \ref{thm:introbrendlehung} and \ref{thm:intromain} could provide some insight towards the sharp isoperimetric inequality. 

 The proof of Theorem \ref{thm:intromain} may be found in Section \ref{sec:min}. It uses only the coarea formula and the divergence theorem, except that the key is to apply the divergence theorem to a different vector field at each $s$. Thus, as for the classical monotonicity formula, our moving-centre monotonicity formula also holds for stationary varifolds, and also admits an almost-monotonicity for submanifolds with $L^p$-bounded mean curvature. We also provide a second proof using only the divergence theorem, which may clarify Brendle-Hung's vector field $W$. 
 
\subsection{Monotonicity formulae for related geometric systems}
In Theorem \ref{thm:ocmmcf}, Theorem \ref{thm:ocmharm1} and Theorem \ref{thm:ocmhmheat} we present moving-centre monotonicity formulae for the mean curvature flow, $p$-harmonic maps and harmonic map heat flow respectively. 

In Section \ref{sec:harm} we consider stationary $p$-harmonic maps $(p>1)$; for the fixed-centre monotonicity in this setting one may consult \cite{naber}. The minimal submanifold case is morally the $p=1$ case; in fact it is the critical case for our moving-centre monotonicity in the sense that for $p>1$, there is a term of the wrong sign that cannot be fully absorbed in the naive manner. The offending term may be handled either by accepting an almost-monotonicity and multiplying the monotone quantity by a correcting factor, or instead by adjusting the scale more carefully. For $p>1$ the latter method applies only if the centre does not move too quickly. Interpolating between the two methods is what gives rise to our family of monotonicity formulae for these elliptic systems.

In Sections \ref{sec:mcf} and \ref{sec:hmheat} we present our results for certain parabolic systems, namely the mean curvature flow and harmonic map heat flow respectively. The respective fixed-centre monotonicity formulae are due to Huisken \cite{huisken} and Struwe \cite{struwe}. In both cases, the monotone quantity involves a global energy-type integral against a Gaussian weight. Thus, unlike the elliptic case, an additional factor to compensate for the motion of the centre appears to be unavoidable. However, for these geometric flows, we obtain a monotonicity for motion of the centre along any $C^1$ path, not just on straight lines. 

Finally, one should note that a type of moving-centre monotonicity was used by Colding and Minicozzi \cite{CMgeneric} to show that the entropy of a mean curvature flow self-shrinker is achieved by the Gaussian area at the natural centre and scale of the self-shrinker. This is an important step in their classification of entropy-stable, or generic, self-shrinkers (see also \cite{zhu}). For the reader's convenience we briefly describe their result in Section \ref{sec:shrinker}.

\subsection{Notation}

In Euclidean space we will always use $x$ to denote the position vector. 

When working with submanifolds, we will use $\ol{\nabla}$ to denote the ambient connection and $\nabla$ for the induced connection on a submanifold, with $D$ reserved for the Euclidean connection. We use $y^T$ for the projection of a vector $y$ to the tangent bundle, and $y^\perp$ for the projection to the normal bundle. 

When dealing with maps between manifolds $M\rightarrow N \hookrightarrow \mathbb{R}^n$, we will unambiguously use $\nabla$ for the connection on $M$. If $M=\mathbb{R}^m$ we use lower, Latin indices for coordinates on $\mathbb{R}^m$ and upper, Greek indices for coordinates on $\mathbb{R}^n$. Repeated indices are summed throughout, unless otherwise noted, and commas denote derivatives. In this setting we use $\cdot$ to distinguish contraction on $M$ from full contraction $\langle ,\rangle$. 

We use $I$ to denote an open interval in $\mathbb{R}$. 

We will need the coarea formula, which states that for a proper Lipschitz function $f$ and a locally integrable function $u$ on a manifold $M$, one has 
\begin{equation}
\int_{\{f\leq t\}} u|\nabla f| = \int_{-\infty}^t \d\tau \int_{\{f=\tau\}} u.
\end{equation}
(See for instance \cite{CMbook}, or \cite{simon} for more general statements including for varifolds.)

We denote by $B^k(p,r)$ the (open) Euclidean ball in $\mathbb{R}^k$ with centre $p$ and radius $r$. For simplicity we will write $B^k_r = B^k(0,r)$. We will often omit the dimension when it is clear from context.

We typically prefer to derive our monotonicity formulae in differential form; derivatives with respect to the time or scale parameter should be interpreted in the distribution sense.

Additional notation and background specific to each setting will be explained in the respective sections of this paper.

\subsection*{Acknowledgements}

We are indebted to Nick Edelen, as well as Otis Chodosh, for bringing the initial problem to the author's attention and for stimulating discussions. The author would also like to thank Prof. Bill Minicozzi for numerous valuable suggestions.

\section{Minimal submanifolds}
\label{sec:min}

Recall that the divergence theorem or first variation formula for submanifolds states that \begin{equation}\label{eq:mcfdivthm}\int_{\Sigma} \div_{\Sigma} X = - \int_{\Sigma} \langle X,\vec{H}\rangle + \int_{\pr\Sigma} \langle X,\nu\rangle \end{equation} for any smooth compactly supported ambient vector field $X$, and that minimal submanifolds are those that satisfy $\vec{H}=0$. Here $\nu$ is the outward unit normal of $\pr \Sigma$ with respect to $\Sigma$. 

The classical monotonicity formula for minimal submanifolds (see for instance \cite{simon}, or \cite{CMbook}) states that:

\begin{proposition}
Let $\Sigma^k$ be a minimal submanifold in the ball $B_{\bar{r}}^n \subset \mathbb{R}^n$ with $\pr \Sigma \subset \pr B_{\bar{r}}^n$. Then for $0<r<\bar{r}$ one has 
\begin{equation}
\frac{d}{dr} \left(r^{-k} |\Sigma \cap B_r|\right) = r^{-k-1} \int_{\Sigma \cap \pr B_r} \frac{|x^\perp|^2}{|x^T|} .
\end{equation}

Equivalently, for $0<r<t<\bar{r}$, we have 
\begin{equation}
t^{-k}|\Sigma\cap B_t| - r^{-k}|\Sigma\cap B_r| = \int_{\Sigma \cap B_t\setminus B_r} \frac{|x^\perp|^2}{|x|^{k+2}}
\end{equation}

In particular, the area ratio $r^{-k} |\Sigma \cap B_r|$ is non-decreasing in $r$, and is constant if and only if $\Sigma$ is a cone (with vertex at 0).  
\end{proposition}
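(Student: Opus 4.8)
The plan is to derive a differential identity for the area $A(r) := |\Sigma\cap B_r|$ from the first variation formula, to compute $A'(r)$ independently from the coarea formula, and then to combine the two.

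First I would apply the first variation formula \eqref{eq:mcfdivthm} to the position vector field $X = x$ on $\Sigma\cap B_r$. Choosing an orthonormal frame $\{e_i\}_{i=1}^k$ for $T\Sigma$ and using $D_{e_i}x = e_i$ gives $\div_\Sigma x = k$. Since $r<\bar r$, the hypothesis $\pr\Sigma\subset\pr B_{\bar r}$ ensures $\pr\Sigma\cap\ol{B_r}=\emptyset$, so the only boundary contribution is along $\Sigma\cap\pr B_r$, while minimality kills the mean-curvature term. The outward unit conormal there is $\nu = x^T/|x^T|$, and $\langle x,\nu\rangle = |x^T|^2/|x^T| = |x^T|$, so
\[ k\,|\Sigma\cap B_r| = \int_{\Sigma\cap\pr B_r} |x^T| . \]
Next I would compute $A'(r)$ using the coarea formula for the Lipschitz function $\rho := |x|$ on $\Sigma$. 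Its induced gradient satisfies $\nabla\rho = x^T/|x|$, so $|\nabla\rho| = |x^T|/r$ on $\Sigma\cap\pr B_r$; taking $u = 1/|\nabla\rho|$ in the coarea formula yields $A(r) = \int_0^r\big(\int_{\Sigma\cap\pr B_\tau}|\nabla\rho|^{-1}\big)\,d\tau$, hence
\[ A'(r) = \int_{\Sigma\cap\pr B_r}\frac{r}{|x^T|} . \]

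Combining, I would write $\frac{d}{dr}(r^{-k}A) = r^{-k-1}\big(rA'(r)-kA(r)\big)$ and substitute the two identities; the bracketed integrand becomes $(r^2-|x^T|^2)/|x^T|$. On $\pr B_r$ one has $r^2 = |x|^2 = |x^T|^2 + |x^\perp|^2$, so the numerator is exactly $|x^\perp|^2$, producing the stated differential formula. The integrated version then follows by integrating in $r$ and invoking the coarea formula once more in the region-integral direction (converting $\int_r^t \tau^{-k-1}\int_{\Sigma\cap\pr B_\tau} |x^\perp|^2/|x^T|\,d\tau$ into the annular integral of $|x^\perp|^2/|x|^{k+2}$), and the equality case follows since the right-hand side vanishes identically precisely when $x^\perp\equiv 0$ along $\Sigma$, i.e. when the position vector is everywhere tangent, which is the defining condition for a cone with vertex at the origin.

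The main subtlety I expect is the bookkeeping of the two opposite appearances of $|x^T|$ — in the numerator from the divergence theorem and in the denominator from the coarea formula — together with justifying differentiation of the coarea expression (or, in the varifold setting, interpreting the $r$-derivatives distributionally, as the paper notes).
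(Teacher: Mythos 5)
Your proof is correct and coincides with the paper's own approach: while the paper cites \cite{simon} and \cite{CMbook} for this classical proposition rather than writing out a proof, its proof of the moving-centre Theorem \ref{thm:ocmmin} is exactly your argument (coarea formula plus the divergence theorem applied to the vector field $X_s = x-y-sy$), and your computation is precisely its specialisation to $y=0$, where $X_s = x$. All steps check out, including the substitution $r^2-|x^T|^2 = |x^\perp|^2$ on $\Sigma\cap\pr B_r$, the second application of the coarea formula for the integral form, and the equality case $x^\perp\equiv 0$ characterising cones with vertex at the origin.
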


In order to state our moving-centre monotonicity formula, we first define a family of extrinsic balls on which to view the submanifold. Note that in this section, we work with equivalent forms of Theorems \ref{thm:introbrendlehung} and \ref{thm:intromain}, in which the ball $B_R$ is scaled back to the unit ball $B_1$. In particular $y$ will denote a point in the unit ball. 

\begin{definition}
\label{defn:Eballs}
Fix $y\in B_1^n$. For $s\geq 0$, denote the ball \begin{equation}E_s=B^n\left((1-s)y, r(s)\right)\subset \mathbb{R}^n ,\end{equation} where \begin{equation}
\label{eq:choosers} 
r(s)=\sqrt{s(1-|y|^2)+s^2|y|^2)}.\end{equation} Note that the $E_s$, $s\geq 0$ foliate the half-space defined by $\langle x,y\rangle < \frac{1+|y|^2}{2}$. 

These balls may also be realised as sub-level sets, \begin{equation}E_s=\{0\leq f < s\},\end{equation} where explicitly  \begin{equation}f(x) = \frac{|x-y|^2}{1-2\langle x,y\rangle +|y|^2} = \frac{|x-y|^2}{1-|x|^2+|x-y|^2}.\end{equation} 
\end{definition}

The moving-centre monotonicity formula is then as follows:

\begin{theorem}
\label{thm:ocmmin}
Let $\Sigma^k$ be a minimal submanifold in $E_{\bar{s}}\subset \mathbb{R}^n$ with $\pr\Sigma \subset \pr E_{\bar{s}}$ for some $\bar{s}$. Then for $0<s<\bar{s}$, we have that
\begin{equation}
\label{eq:ocmmindiff1}
\frac{d}{ds} \left(s^{-\frac{k}{2}}|\Sigma \cap E_s |\right) = \frac{s^{-\frac{k+2}{2}}}{2} \int_{\Sigma \cap \pr E_s} \frac{|(x-y)^\perp|^2+s^2|y^T|^2}{|(x-y+sy)^T|} .
\end{equation}
Equivalently, for $0<s<t<\bar{s}$, we have 
\begin{equation}
\label{eq:ocmminint}
t^{-\frac{k}{2}}|\Sigma \cap E_t| - s^{-\frac{k}{2}}|\Sigma \cap E_s| = \int_{\Sigma \cap E_t\setminus E_s} f^{-\frac{k}{2}}\left( \frac{|(x-y)^\perp|^2+f^2|y^T|^2}{|x-y|^2}\right).
\end{equation}

In particular, the quantity \begin{equation} s^{-\frac{k}{2}}|\Sigma \cap E_s|\end{equation} is nondecreasing, and is constant if and only if $\Sigma$ is a flat disk orthogonal to $y$. 
\end{theorem}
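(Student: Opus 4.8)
The plan is to mimic the proof of the classical monotonicity formula, coupling the coarea formula to the first variation formula (\ref{eq:mcfdivthm}); the single new ingredient is that the divergence theorem is applied to a vector field that depends on $s$. Throughout I abbreviate $V(s)=|\Sigma\cap E_s|$, write $g(x)=1-2\langle x,y\rangle+|y|^2$ for the denominator of $f$ in Definition \ref{defn:Eballs}, and set $w:=x-(1-s)y$, the position relative to the centre of $E_s$. A direct differentiation gives $\nabla f=\frac{2}{g^2}\big[(x-y)g+|x-y|^2y\big]$, which on the level set $\{f=s\}$, where $|x-y|^2=sg$, collapses to $\nabla f=\frac{2}{g}w$. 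Hence along $\Sigma\cap\pr E_s$ the outward conormal of $\Sigma\cap E_s$ in $\Sigma$ is $\nu=w^T/|w^T|$ and $|\nabla^\Sigma f|=2|w^T|/g$, so applying the coarea formula to $f|_\Sigma$ yields at once $V'(s)=\int_{\Sigma\cap\pr E_s}\tfrac{g}{2|w^T|}$.

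The crux is the second ingredient. The naive analogue of the position vector $x$ from the fixed-centre case would be $w$ itself, but this does not close up: it leaves an uncancelled cross term $\langle(x-y)^T,y^T\rangle$ and the wrong sign on the $|y^T|^2$ term. Instead I would apply (\ref{eq:mcfdivthm}) to $X_s:=x-(1+s)y=w-2sy$. Since $(1+s)y$ is constant in $x$ one still has $\div_\Sigma X_s=k$, so minimality together with $\pr\Sigma\cap\overline{E_s}=\emptyset$ gives $kV(s)=\int_{\Sigma\cap\pr E_s}\langle X_s,\nu\rangle$. Writing $X_s=(x-y)-sy$ and $w=(x-y)+sy$, the boundary integrand is $\langle X_s,w^T\rangle/|w^T|=\big(|(x-y)^T|^2-s^2|y^T|^2\big)/|w^T|$. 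The extra summand $-2sy$ is exactly what is needed to turn $\langle w,w^T\rangle=|w^T|^2$ into this difference of squares, cancelling the offending cross term.

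It then remains to assemble the two ingredients. From $\frac{d}{ds}(s^{-k/2}V)=s^{-(k+2)/2}\big(sV'-\tfrac{k}{2}V\big)$ and the formulae above, the bracket equals $\tfrac12\int_{\Sigma\cap\pr E_s}\big(sg-|(x-y)^T|^2+s^2|y^T|^2\big)/|w^T|$; using $sg=|x-y|^2$ on $\{f=s\}$ to replace $sg-|(x-y)^T|^2$ by $|(x-y)^\perp|^2$, and noting $|w^T|=|(x-y+sy)^T|$, produces exactly (\ref{eq:ocmmindiff1}). The integral form (\ref{eq:ocmminint}) follows by integrating this identity in $s$ and converting the iterated boundary integral back to a bulk integral over $\Sigma\cap(E_t\setminus E_s)$ with the coarea formula, simplifying via $|\nabla^\Sigma f|=2|w^T|/g$ and $g=|x-y|^2/f$. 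Non-negativity is immediate since $s>0$ and the integrand is a sum of squares, and the derivative vanishes identically precisely when $(x-y)^\perp\equiv0$ and $y^T\equiv0$ on $\Sigma$; the first says $\Sigma$ is a cone with vertex $y$, the second that it lies in a hyperplane orthogonal to $y$, and together with smoothness these force a flat disk orthogonal to $y$.

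I expect the main obstacle to be pinning down the correct moving vector field $X_s=x-(1+s)y$, equivalently understanding why the radius must be $r(s)=\sqrt{s(1-|y|^2)+s^2|y|^2}$ (this is the content lurking behind Brendle--Hung's $W$): the shift by $2sy$ away from the apparent centre $(1-s)y$ must be calibrated precisely so that it annihilates the cross term $\langle(x-y)^T,y^T\rangle$ and corrects the sign of the $s^2|y^T|^2$ term. Once this choice is identified, the remainder is routine manipulation with the coarea formula and the divergence theorem.
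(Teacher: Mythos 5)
Your proposal is correct and follows essentially the same route as the paper's first proof: the same vector field $X_s=x-(1+s)y=x-y-sy$, the same coupling of the coarea formula for $f|_\Sigma$ with the divergence theorem applied to an $s$-dependent field, and the same rigidity argument from $(x-y)^\perp\equiv 0$ and $y^T\equiv 0$. The only cosmetic difference is that you compute $\nabla f=\tfrac{2}{g}\left(x-y+sy\right)$ on the level set $\{f=s\}$ by direct differentiation of the explicit formula, where the paper derives the equivalent identity of Lemma \ref{lem:mindf} by implicit differentiation of the centre--radius relation.
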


\begin{remark}
Our proof will only require the coarea and first variation formulae, so it can be seen that Theorem \ref{thm:ocmmin} also holds for stationary varifolds $\Sigma$, except that in the equality case one must allow for cones with vertex at $y$ that are orthogonal to $y$. 

Note that the above is indeed equivalent to Theorem \ref{thm:intromain}, since rearranging (\ref{eq:choosers}) yields that $r(s)^2 - s^2|y|^2 = s(1-|y|^2)$. 

Taking $y=0$ of course recovers the classical monotonicity formula for minimal submanifolds.
\end{remark}

It may be helpful to note that if $\Sigma_0$ is indeed a flat $k$-plane orthogonal to $y$, then any $x\in \Sigma_0$ satisfies $|x|^2 = |y|^2 + |x-y|^2$ and hence $\Sigma_0 \cap E_s$ is a flat $k$-disk of radius $\sqrt{s(1-|y|^2)}$ as expected.   

\begin{corollary}[\cite{brendlehung}]
\label{thm:brendlehung}
Let $\Sigma^k$ be a minimal submanifold in the unit ball $B_1^n \subset \mathbb{R}^n$ with $\pr \Sigma \subset \pr B_1^n$ and $y \in \Sigma$. Then $|\Sigma| \geq |B^k_1|(1-|y|^2)^\frac{k}{2}$, with equality if and only if $\Sigma$ is a flat disk orthogonal to $y$.  
\end{corollary}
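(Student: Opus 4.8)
The plan is to apply the moving-centre monotonicity formula of Theorem \ref{thm:ocmmin} to the family $E_s$ of Definition \ref{defn:Eballs}, comparing the two natural endpoints $s\searrow 0$ and $s=1$. The key structural observation is that this family interpolates between the single point $y$ and the whole unit ball: since $r(0)=0$ with centre $y$ we have $E_0=\{y\}$, while $r(1)=\sqrt{(1-|y|^2)+|y|^2}=1$ with centre $0$ gives $E_1=B_1^n$. If $|y|=1$ the asserted bound is trivial, so I may assume $y$ lies in the interior, $|y|<1$, and in particular that $y$ is an interior point of $\Sigma$.

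At $s=1$, since $\Sigma\subset B_1^n$ with $\partial\Sigma\subset\partial B_1^n$, the set $\Sigma\cap E_1$ differs from $\Sigma$ by a set of measure zero, so the monotone quantity $s^{-k/2}|\Sigma\cap E_s|$ takes the value $|\Sigma|$ there. The crux is therefore to compute the limit of $s^{-k/2}|\Sigma\cap E_s|$ as $s\searrow 0$. Here I would argue that the balls $E_s$ approximate Euclidean balls centred at $y$: their centres $(1-s)y$ are displaced from $y$ by $s|y|=o(r(s))$ as $s\to 0$ (using $r(s)=\sqrt{s(1-|y|^2)}\,(1+o(1))$), so that for every $\varepsilon>0$ one has $B^n(y,(1-\varepsilon)r(s))\subset E_s\subset B^n(y,(1+\varepsilon)r(s))$ for $s$ small. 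Consequently
\[
\lim_{s\searrow 0} s^{-\frac{k}{2}}|\Sigma\cap E_s| = \Theta(y)\,|B_1^k|\,(1-|y|^2)^{\frac{k}{2}},
\]
where $\Theta(y)=\lim_{\rho\to 0} |B_1^k|^{-1}\rho^{-k}|\Sigma\cap B^n(y,\rho)|$ is the density of $\Sigma$ at $y$; this limit exists and satisfies $\Theta(y)\geq 1$ by the classical monotonicity formula at the fixed centre $y$, with equality precisely when the unique tangent cone at $y$ is a flat $k$-plane of multiplicity one.

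Combining these facts with the monotonicity of $s^{-k/2}|\Sigma\cap E_s|$ then yields
\[
|\Sigma| = 1^{-\frac{k}{2}}|\Sigma\cap E_1| \geq \lim_{s\searrow 0} s^{-\frac{k}{2}}|\Sigma\cap E_s| = \Theta(y)\,|B_1^k|\,(1-|y|^2)^{\frac{k}{2}} \geq |B_1^k|\,(1-|y|^2)^{\frac{k}{2}},
\]
which is the desired area bound. For the equality case I would read it off the integrated form (\ref{eq:ocmminint}): equality forces both $\Theta(y)=1$ and the vanishing of $|(x-y)^\perp|^2+f^2|y^T|^2$ throughout $\Sigma$. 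Away from $y$ (where $f>0$) this requires $(x-y)^\perp=0$ and $y^T=0$, i.e.\ that $\Sigma$ is a cone with vertex $y$ which is everywhere orthogonal to $y$; together with $\Theta(y)=1$ this identifies $\Sigma$ as a flat $k$-disk orthogonal to $y$, matching the equality clause of Theorem \ref{thm:ocmmin}.

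I anticipate the main obstacle to be the $s\searrow 0$ asymptotic identification of the limit with the density $\Theta(y)$. One must justify that the motion of the centre and the higher-order correction in the radius $r(s)$ do not affect the leading-order volume; this is where the relation $r(s)^2-s^2|y|^2=s(1-|y|^2)$ built into the definition of $E_s$ enters, guaranteeing that $s^{-k/2}r(s)^k\to(1-|y|^2)^{k/2}$. For stationary varifolds this step would instead be justified by the existence of tangent cones at $y$ and upper semicontinuity of the density.
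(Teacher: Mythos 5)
Your proof is correct and takes essentially the same approach as the paper: both compare $s\searrow 0$ to $s=1$ in Theorem \ref{thm:ocmmin}, using that $E_s$ is asymptotic to $B(y,\sqrt{s(1-|y|^2)})$ so that the limit recovers the density $\Theta(y)\geq 1$ at $y$, and that $E_1=B(0,1)$. Your extra detail on the centre-displacement asymptotics and the rigidity discussion simply makes explicit what the paper's short proof leaves implicit.
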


Corollary \ref{thm:brendlehung} was first proven in full generality by Brendle and Hung \cite{brendlehung}, using a carefully chosen vector field that we will return to later in this section. 

\begin{proof}[Proof of Corollary \ref{thm:brendlehung} using Theorem \ref{thm:ocmmin}]
As $s \searrow 0$, the balls $E_s$ are asymptotic to the balls $B(y, \sqrt{s(1-|y|^2)})$. So the limit $(1-|y|^2)^{-\frac{k}{2}}\lim_{s\rightarrow 0} s^{-\frac{k}{2}}|\Sigma \cap E_s|$ is equal to the density of $\Sigma$ at $y$, which is at least 1 since $y\in\Sigma$. On the other hand $E_1 = B(0,1)$, and thus comparing $s=0$ to $s=1$ using Theorem \ref{thm:ocmmin} immediately yields Corollary \ref{thm:brendlehung}.  
\end{proof}

We now turn to the proof of Theorem \ref{thm:ocmmin}, for which we first calculate the gradient of $f$:

\begin{lemma}
\label{lem:mindf}
Let $r(s), E_s$ and the function $f$ with $\pr E_s = \{f=s\}$ be as in Definition \ref{defn:Eballs}. Then whereever $f>0$, we have that
\begin{equation}
\frac{Df}{2f}=  \frac{x-y+fy}{|x-y|^2}.
\end{equation}
\end{lemma}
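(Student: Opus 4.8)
The plan is a direct computation via the quotient rule, followed by re-expressing the result in terms of $f$ itself. First I would record the two pieces of the quotient: write $f = N/g$, where $N = |x-y|^2$ is the numerator and $g = 1 - 2\langle x,y\rangle + |y|^2$ is the denominator. (Note that $g = 1 - |x|^2 + |x-y|^2$ as well, matching the two forms given in Definition \ref{defn:Eballs}, so either expression may be differentiated.) Since $N$ and $g$ are polynomials in $x$, their Euclidean gradients are immediate: $DN = 2(x-y)$, while $Dg = -2y$, because among the terms of $g$ only the cross term $-2\langle x,y\rangle$ depends on $x$.

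Next I would apply the quotient rule $Df = (g\,DN - N\,Dg)/g^2$, which gives
\[
Df = \frac{2(x-y)\,g + 2|x-y|^2\,y}{g^2}.
\]
Dividing by $2f$ and using $f = N/g = |x-y|^2/g$ then reduces the entire identity to a single algebraic manipulation; the point is to trade the explicit denominator $g$ for the function $f$, which is what makes the compact right-hand side appear.

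The key step — and essentially the only one requiring any thought — is to eliminate $g$ in favour of $f$. Wherever $f > 0$ we have $x \neq y$, so $|x-y|^2 \neq 0$ and we may write $g = |x-y|^2/f$. Substituting this into the expression for $Df/(2f)$, the powers of $|x-y|^2$ and the factors of $f$ cancel cleanly, leaving
\[
\frac{Df}{2f} = \frac{(x-y) + fy}{|x-y|^2},
\]
which is the claimed formula. There is no real obstacle beyond bookkeeping; the only point to keep in mind is that the hypothesis $f > 0$ guarantees $|x-y|^2 \neq 0$, so that both the division by $2f$ and the substitution $g = |x-y|^2/f$ are legitimate.
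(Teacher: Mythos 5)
Your computation is correct: with $N=|x-y|^2$ and $g=1-2\langle x,y\rangle+|y|^2$ one indeed gets $Df = \bigl(2(x-y)g + 2Ny\bigr)/g^2$, and after dividing by $2f$ and substituting $g = N/f$ (legitimate since $f>0$ forces $N\neq 0$, and $f$ is only defined where $g\neq 0$) the claimed identity $\frac{Df}{2f} = \frac{x-y+fy}{|x-y|^2}$ drops out. However, this is a genuinely different route from the paper's. The paper explicitly notes that one \emph{may} verify the lemma from the closed form of $f$, but instead derives it from the level-set characterisation alone: since the level set $\{f=s\}$ is the sphere of centre $(1-s)y$ and radius $r(s)$, the function satisfies $|x-(1-f)y|^2=\rho(f)$ with $\rho(s)=r(s)^2$; implicit differentiation (using that $Df$ is proportional to $x-(1-f)y$) gives $\tfrac12 Df = \frac{x-y+fy}{\rho'(f)-2\langle x-y,y\rangle-2f|y|^2}$, and the denominator collapses to $|x-y|^2/f$ precisely because $\rho$ solves the ODE $s(\rho'(s)-2s|y|^2)=\rho(s)-s^2|y|^2$. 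What your approach buys is brevity and self-containedness: pure quotient-rule bookkeeping, with no need to know where $f$ or $r(s)$ came from. What the paper's approach buys is the explanation of \emph{why} $r(s)$ was chosen as in (\ref{eq:choosers}): the ODE for $\rho$ is exactly the constraint that makes the gradient formula (and hence the monotonicity) come out clean, so the level-set argument would let one \emph{discover} the family $E_s$ rather than merely verify it, and it adapts more gracefully to situations like Section \ref{sec:harm}, where the explicit formula for $f_q$ ($q>1$) involves a square root and direct differentiation is considerably messier. Your proof is complete as a verification, but be aware it conceals the mechanism that the paper's proof is designed to expose.
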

\begin{proof}
One may verify this using the explicit formula for $f$, but it is more illuminating to proceed using only the characterisation by level sets together with the choice of $r(s)$.
 
Indeed, let $\rho(s) = r(s)^2 = s(1-|y|^2) + s^2|y|^2$. By construction, the level sets of $f$ are the spheres $\pr E_s$ with centre $(1-s)y$ and radius $r(s)$. So the function $f$ satisfies
\begin{equation}
\label{eq:centreradiusmin}
|x-(1-f)y|^2 = \rho(f)
\end{equation}
or in somewhat expanded form 
\begin{equation}\label{eq:minDs2}|x-y|^2 = -2f\langle x-y,y\rangle+\rho(f) - f^2|y|^2  .\end{equation} 
Moreover $Df$ must be proportional to $x-(1-f)y$, so implicitly differentiating (\ref{eq:centreradiusmin}), we find that \begin{equation}\label{eq:minDf}\frac{1}{2}D f = \frac{x-y+fy}{\rho'(f) - 2\langle x-y, y\rangle -2f|y|^2 } .\end{equation} On the other hand, we note that $\rho$ satisfies the differential equation \begin{equation} s(\rho'(s) - 2s|y|^2) = \rho(s) - s^2|y|^2  .\end{equation}

Therefore using (\ref{eq:minDs2}) yields 
\begin{equation}
\frac{1}{2}Df =  \frac{f(x-y+fy)}{|x-y|^2}.
\end{equation}
\end{proof}

\begin{proof}[Proof of Theorem \ref{thm:ocmmin}]
The outward unit normal $\nu$ to $ \Sigma\cap \{f=s\}$ considered as the boundary of $\Sigma \cap \{f< s\}$ is given by $\nu = \frac{\nabla f}{|\nabla f|}$. For fixed $s$ we let $X_s$ be a vector field with $\div_\Sigma X_s \equiv k$, to be chosen later. By the divergence theorem we would then have \begin{equation} |\Sigma \cap E_s | = \frac{1}{k} \int_{\Sigma \cap \{f=s\}} \langle X_s, \frac{\nabla f}{|\nabla f|}\rangle.\end{equation}

On the other hand, by the coarea formula we have \begin{equation}|\Sigma\cap E_s| = \int_0^s \d\tau \int_{\Sigma\cap\{f=\tau\}} \frac{1}{|\nabla f|}.\end{equation} 

This allows us to compute the derivative of $|\Sigma \cap E_s|$ as an integral over $\Sigma \cap \pr E_s$, using Lemma \ref{lem:mindf}: \begin{eqnarray} \frac{d}{ds} \left(s^{-\frac{k}{2}}|\Sigma \cap E_s|\right) &=& s^{-\frac{k+2}{2}} \int_{\Sigma \cap\{f=s\}} \frac{1}{|\nabla f|}\left( s-\frac{1}{2}\langle X_s,\nabla f\rangle \right)\\&=&\nonumber s^{-\frac{k+2}{2}} \int_{\Sigma \cap\{f=s\}} \frac{s}{|\nabla f|}\left( 1-\langle X_s,\frac{\nabla f}{2f}\rangle \right) \\&=&\nonumber s^{-\frac{k}{2}} \int_{\Sigma \cap\{f=s\}} \frac{1}{|\nabla f|}\left( 1-\frac{\langle X_s, (x-y+sy)^T \rangle}{|x-y|^2} \right) .\end{eqnarray}

Choosing $X_s = x-y-sy$, we indeed have $\div_\Sigma X_s = k$ since $s$ is fixed, and moreover \begin{equation} \langle X_s, (x-y+sy)^T \rangle = |(x-y)^T|^2 - s^2 |y^T|^2.\end{equation} 

Thus 
\begin{equation}
\label{eq:ocmmindiff0}
\frac{d}{ds} \left(s^{-\frac{k}{2}}|\Sigma \cap E_s |\right) = s^{-\frac{k}{2}} \int_{\Sigma \cap\{f=s\}} \frac{1}{|\nabla f|}\left(\frac{|(x-y)^\perp|^2+s^2|y^T|^2}{|x-y|^2} \right).
\end{equation}

Using Lemma \ref{lem:mindf} to replace $|\nabla f|$ yields the differential form (\ref{eq:ocmmindiff1}), whilst integrating (\ref{eq:ocmmindiff0}) using the coarea formula a second time gives the integral form (\ref{eq:ocmminint}). It is clear from either formulation that $s^{-\frac{k}{2}}|\Sigma \cap E_s|$ is constant if and only if $(x-y)^\perp\equiv 0$ and $y^T\equiv 0$ on $\Sigma$. The first condition implies that $\Sigma$ is a cone with vertex at $y$ (hence a plane, if $\Sigma$ is smooth), and the second implies that $\Sigma$ is orthogonal to $y$. 
\end{proof}

\begin{remark}
As with the classical monotonicity formula, one still obtains an almost-monotonicity if one assumes only an $L^p$ bound for the mean curvature $\vec{H}$, by following the proof above and bounding the vector field $X_s$ on the set $E_s$ to handle the extra term. 

For instance, if the mean curvature is bounded by $|\vec{H}|\leq C_H$, then using the bound \begin{equation} |X_s|\leq 2s|y| + \sqrt{s(1-|y|^2)+s^2|y|^2} \leq 3s|y| + \sqrt{s(1-|y|^2)}\end{equation} on $\{0\leq f\leq s\}$, one still obtains a monotone quantity after multiplying by the integrating factor $\exp(kC_H\mu)$, where \begin{equation}\mu = \frac{1}{2}\int \left(3|y| + \sqrt{\frac{1-|y|^2}{s}}\right) \d s = \frac{3}{2}s|y| + \sqrt{s(1-|y|^2)} . \end{equation}
\end{remark}

For completeness, we now give another proof of Theorem \ref{thm:ocmmin} that is instead motivated by, and utilises, the work of Brendle-Hung \cite{brendlehung}. By using the divergence theorem (twice), this proof recovers the integral formulation (\ref{eq:ocmminint}).

\begin{proof}[Alternative proof of Theorem \ref{thm:ocmmin}]
With $f$ defined as above, the vector field utilised by Brendle and Hung may be written as \begin{equation}W = -\frac{1}{k}(f^{-\frac{k}{2}}-1)(x-y) + F(f)y,\end{equation} \begin{equation}F(t) := \begin{cases} \frac{1}{k-2}(t^{\frac{2-k}{2}}-1) &, k>2 \\ -\frac{1}{2}\log t &, k=2.\end{cases}\end{equation}

Setting $W_0 := \frac{1}{k}(x-y)-W$, the computations of Brendle and Hung \cite{brendlehung} yield that \begin{eqnarray}\div_\Sigma W_0 &=& 1-\div_\Sigma W \\&=&\nonumber \frac{f^{-\frac{k}{2}}|(x-y)^\perp|^2 + f^{-\frac{k-4}{2}} |y^T|^2}{|x-y|^2}  \end{eqnarray} 

On the other hand, for any $0<s<\bar{s}$, when restricted to $\pr E_s=\{f=s\}$ we have $W_0 = \frac{1}{k}s^{-\frac{k}{2}}(x-y) - F(s)y$. So applying the divergence theorem we find that \begin{equation} \int_{\Sigma\cap \{f=s\}} \langle W_0,\nu\rangle = \int_{\Sigma\cap\{f=s\}} \langle \frac{1}{k}s^{-\frac{k}{2}}(x-y) - F(s)y,\nu\rangle = s^{-\frac{k}{2}} | \Sigma \cap \{f\leq s\}|,\end{equation} since $\div_\Sigma x=k$ and $\div_\Sigma y=0$. 

Then applying the divergence theorem a second time, for any $0<s<t < \bar{s}$ we indeed have \begin{eqnarray}\nonumber t^{-\frac{k}{2}}|\Sigma\cap\{f< t\}| - s^{-\frac{k}{2}}|\Sigma\cap\{f< s\}|&=& \int_{\Sigma\cap \{f=t\}} \langle W_0,\nu\rangle - \int_{\Sigma\cap \{f=s\}} \langle W_0,\nu\rangle \\&=&\nonumber \int_{\Sigma\cap \{s< f < t\}} \div_\Sigma W_0 \\&=&\int_{\Sigma \cap\{s < f < t\}} \frac{f^{-\frac{k}{2}}|(x-y)^\perp|^2 + f^{-\frac{k-4}{2}} |y^T|^2}{|x-y|^2} . \end{eqnarray}
\end{proof}

\section{Mean curvature flow}
\label{sec:mcf}

A one-parameter family of submanifolds $\Sigma_t^k \subset \mathbb{R}^n$ flows by mean curvature if it satisfies $\pr_t x = \vec{H}$. For submanifolds moving by mean curvature, Huisken \cite{huisken} (see also \cite{ecker}) discovered a monotonicity for Gaussian areas. For a submanifold $\Sigma^k \subset \mathbb{R}^n$, define the Gaussian density with centre $x_0\in\mathbb{R}^n $ and scale $t_0>0$ by \begin{equation}
\label{eq:gaussianarea}
F_{x_0,t_0}(\Sigma) = \int_\Sigma \rho_{x_0,t_0} , \text{ where } \rho_{x_0,t_0}(x)= (4\pi t_0)^{-\frac{k}{2}} \exp\left(-\frac{|x-x_0|^2}{4t_0}\right) . 
\end{equation}

For a mean curvature flow $\Sigma^k_t$, Huisken's monotone quantity is obtained by decreasing the scale as the flow progresses in time. Specifically, given a spacetime centre $(x_0,t_0)$, the monotone quantity will be $F_{x_0, t_0-t}(\Sigma_t) = \int_{\Sigma_t} \Phi_{x_0,t_0}$, where we have set \begin{equation}\Phi_{x_0,t_0}(x,t)=\rho_{x_0,t_0-t}(x)= (4\pi (t_0-t))^{-\frac{k}{2}} \exp\left(- \frac{|x-x_0|^2}{4(t_0-t)}\right).\end{equation} 

\begin{theorem}[\cite{huisken}]
Suppose that $\{\Sigma^k_t\}_{t\in I}$ is a mean curvature flow in $\mathbb{R}^n$ and fix $x_0\in\mathbb{R}^n$, $t_0\in\mathbb{R}$. Further suppose that $\int_{\Sigma_t} \Phi_{x_0,t_0} <\infty$ for all $t\in I$ with $t<t_0$. 

Then for all such times, one has
\begin{equation}
\frac{d}{dt} \int_{\Sigma_t} \Phi_{x_0,t_0} = -\int_{\Sigma_t} \left| \vec{H} + \frac{(x-x_0)^\perp}{2(t_0-t)} \right|^2 \Phi_{x_0,t_0}.
\end{equation}

In particular, $\int_{\Sigma_t} \Phi_{x_0,t_0}$ is non-increasing for $t<t_0$, and is constant if and only if $\Sigma_t$ is a self-shrinking soliton that shrinks to $(x_0,t_0)$. 
\end{theorem}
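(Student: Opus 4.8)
The plan is to differentiate the Gaussian area directly along the flow and complete a square, in exactly the same spirit as the minimal-surface argument of Theorem \ref{thm:ocmmin}: the only real input is the first variation formula (\ref{eq:mcfdivthm}) applied to a well-chosen vector field. Write $\Phi = \Phi_{x_0,t_0}$ and abbreviate $\tau = t_0-t$ and $z = x-x_0$, so that $\Phi = (4\pi\tau)^{-k/2}\exp(-|z|^2/4\tau)$. Parametrising the flow by $F(\cdot,t)$ with $\pr_t F = \vec H$, and recalling that the induced area measure evolves by $\pr_t \d\mu_t = -|\vec H|^2 \d\mu_t$ (since $\div_\Sigma \vec H = -|\vec H|^2$ for the purely normal velocity $\vec H$), I would first record
\begin{equation}
\frac{d}{dt}\int_{\Sigma_t}\Phi = \int_{\Sigma_t}\left(\pr_t\Phi + \langle\ol\nabla\Phi,\vec H\rangle - |\vec H|^2\Phi\right),
\end{equation}
where $\pr_t\Phi$ is the derivative at fixed $x$ and $\ol\nabla\Phi$ is the ambient gradient. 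A direct computation gives $\pr_t\Phi = \Phi\bigl(\tfrac{k}{2\tau} - \tfrac{|z|^2}{4\tau^2}\bigr)$ and $\langle\ol\nabla\Phi,\vec H\rangle = -\tfrac{\Phi}{2\tau}\langle z^\perp,\vec H\rangle$, using that $\vec H$ is normal.

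The crux is that the raw integrand above does not yet assemble into the desired square: the cross term carries a factor $\tfrac1{2\tau}$ rather than $\tfrac1\tau$, and it features $|z|^2$ rather than $|z^\perp|^2$. Both defects are repaired in one stroke by applying the divergence theorem (\ref{eq:mcfdivthm}) to the vector field $X = \Phi\, z$. Since $\div_\Sigma z = k$ and $\langle\ol\nabla\Phi, z^T\rangle = -\tfrac{\Phi}{2\tau}|z^T|^2$, one computes $\div_\Sigma(\Phi z) = k\Phi - \tfrac{\Phi}{2\tau}|z^T|^2$, and (\ref{eq:mcfdivthm}) with $\vec H$ normal (and no boundary) yields
\begin{equation}
\int_{\Sigma_t}\left(k\Phi - \frac{\Phi}{2\tau}|z^T|^2\right) = -\int_{\Sigma_t}\Phi\,\langle z^\perp,\vec H\rangle.
\end{equation}
Dividing by $2\tau$ and substituting the resulting expression for $\int_{\Sigma_t} k\Phi/2\tau$ into the derivative, the tangential piece $\tfrac{\Phi}{4\tau^2}|z^T|^2$ combines with $-\tfrac{\Phi}{4\tau^2}|z|^2$ to leave $-\tfrac{\Phi}{4\tau^2}|z^\perp|^2$, while the extra $-\tfrac{\Phi}{2\tau}\langle z^\perp,\vec H\rangle$ upgrades the cross term to the full $-\tfrac{\Phi}{\tau}\langle z^\perp,\vec H\rangle$. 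Collecting terms then produces exactly $-\int_{\Sigma_t}\Phi\,|\vec H + z^\perp/2\tau|^2$, the claimed identity. The equality characterisation is immediate, since the integrand vanishes identically precisely when $\vec H = -\tfrac{(x-x_0)^\perp}{2(t_0-t)}$, i.e.\ when $\Sigma_t$ is a self-shrinker with spacetime centre $(x_0,t_0)$.

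The step I expect to require the most care is not the algebra but the justification of the manipulations for noncompact $\Sigma_t$: differentiation under the integral sign, and especially the vanishing of the boundary term in (\ref{eq:mcfdivthm}) when $\pr\Sigma_t = \emptyset$ but $\Sigma_t$ is complete and noncompact. This is where the finiteness hypothesis $\int_{\Sigma_t}\Phi<\infty$ enters, supplemented by the rapid Gaussian decay of $\Phi$ and of $|X| = \Phi|z|$; one truncates against a cutoff supported in large balls, shows the resulting error terms tend to zero using the weight, and passes to the limit (as in Ecker's treatment \cite{ecker}). For closed $\Sigma_t$ all of this is automatic.
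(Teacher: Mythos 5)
Your argument is correct and is essentially the paper's own: the paper obtains this statement as the $y(t)\equiv x_0$ (hence $y'=0$) case of Theorem \ref{thm:ocmmcf}, whose proof likewise combines the first variation formula (\ref{eq:mcf1stvar}) with the divergence theorem (\ref{eq:mcfdivthm}) applied to $\Phi_{x_0,t_0}X$ with $X=-\frac{x-x_0}{2(t_0-t)}$ --- the same vector field as your $\Phi z$ up to the constant factor $-\frac{1}{2\tau}$ --- and completes the same square. Your deferral to Ecker for the noncompact case is also faithful to the paper, which handles the cutoff error term $\langle\vec H,D\chi\rangle$ by one further application of the divergence theorem to $\Phi D\chi$ so that only terms controlled by $\int_{\Sigma_t}\Phi<\infty$ and the bounds $R|D\chi_R|+R^2|D^2\chi_R|\leq C_0$ remain.
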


Recall that a self-shrinking soliton, which shrinks to $(x_0,t_0)$, is a mean curvature flow that is invariant under (backwards) parabolic dilations about $(x_0,t_0)$, or equivalently satisfies $\vec{H} = - \frac{(x-x_0)^\perp}{2(t_0-t)}$ on each $\Sigma_t$.

Motivated by Theorem \ref{thm:brendlehung}, we give a sharp monotonicity formula for mean curvature flow in which the Gaussian centre $x_0$ is allowed to move in time. Namely, we prove that:

\begin{theorem}
\label{thm:ocmmcf}
Suppose that $\{\Sigma^k_t\}_{t\in I}$ is a mean curvature flow in $\mathbb{R}^n$ and fix $t_0\in\mathbb{R}$. Let $y=y(t)$ be a smooth curve in $\mathbb{R}^n$. Further suppose that $\int_{\Sigma_t} \Phi_{y(t),t_0} <\infty$ for all $t\in I$ with $t<t_0$. 

Then for all such times, we have that
\begin{equation}
\label{eq:ocmmcfthm}
\frac{d}{dt} \int_{\Sigma_t} \Phi_{y(t),t_0} = -\int_{\Sigma_t} \left| \vec{H} + \frac{(x-y-(t_0-t)y')^\perp}{2(t_0-t)} \right|^2 \Phi_{y,t_0} + \frac{1}{4}\int_{\Sigma_t}|(y')^\perp|^2 \Phi_{y,t_0} .
\end{equation}

In particular, the quantity $\exp\left(-\frac{1}{4}\int_t^{t_0} |y'(\tau)|^2 \d\tau\right) \int_{\Sigma_t} \Phi_{y(t),t_0}$ is non-increasing for $t<t_0$. 
\end{theorem}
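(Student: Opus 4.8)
The plan is to follow Huisken's computation while permitting the Gaussian centre to move along the curve $y(t)$. Writing $\tau = t_0 - t$, so that $\Phi = \Phi_{y(t),t_0} = (4\pi\tau)^{-\frac{k}{2}}\exp(-|x-y(t)|^2/4\tau)$, I would first differentiate under the integral sign. Since a point of $\Sigma_t$ moves with velocity $\pr_t x = \vec H$ and the induced area element satisfies $\frac{d}{dt}\d\mu_t = -|\vec H|^2\,\d\mu_t$, the Leibniz rule gives
\begin{equation}
\frac{d}{dt}\int_{\Sigma_t}\Phi = \int_{\Sigma_t}\left(\pr_t\Phi + \langle D\Phi,\vec H\rangle - |\vec H|^2\,\Phi\right),
\end{equation}
where $\pr_t\Phi$ denotes the partial derivative at fixed $x$ and $D\Phi$ the Euclidean gradient.

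The one genuinely new ingredient, relative to the fixed-centre case, is hidden in $\pr_t\Phi$: differentiating $-|x-y(t)|^2/4\tau$ at fixed $x$ produces, besides Huisken's terms $\frac{k}{2\tau}\Phi - \frac{|x-y|^2}{4\tau^2}\Phi$, the extra contribution $\frac{\langle x-y,y'\rangle}{2\tau}\Phi$ coming from the motion of the centre. Recording also $D\Phi = -\frac{x-y}{2\tau}\Phi$, the integrand becomes $\left(\frac{k}{2\tau} + \frac{\langle x-y,y'\rangle}{2\tau} - \frac{|x-y|^2}{4\tau^2} - \frac{\langle x-y,\vec H\rangle}{2\tau} - |\vec H|^2\right)\Phi$.

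I would then integrate by parts twice. Using $\nabla_{\Sigma}\Phi = -\frac{(x-y)^T}{2\tau}\Phi$ together with $\div_{\Sigma}((x-y)^T) = k + \langle x-y,\vec H\rangle$, the vanishing of $\int_{\Sigma_t}\Delta_{\Sigma}\Phi$ lets me trade the $\frac{k}{2\tau}$ term for $\int_{\Sigma_t}\left(\frac{|(x-y)^T|^2}{4\tau^2} - \frac{\langle x-y,\vec H\rangle}{2\tau}\right)\Phi$; after splitting $|x-y|^2 = |(x-y)^T|^2 + |(x-y)^\perp|^2$, all terms free of $y'$ collapse to the Huisken perfect square $-\left|\vec H + \frac{(x-y)^\perp}{2\tau}\right|^2\Phi$. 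To handle the surviving term I would apply the divergence theorem to the tangential field $\Phi\,(y')^T$, using $\div_{\Sigma}((y')^T) = \langle y',\vec H\rangle$; this rewrites $\int_{\Sigma_t}\frac{\langle x-y,y'\rangle}{2\tau}\Phi$ as $\int_{\Sigma_t}\langle \vec H + \frac{(x-y)^\perp}{2\tau},(y')^\perp\rangle\Phi$. Completing the square in $\vec H + \frac{(x-y)^\perp}{2\tau} - \frac{(y')^\perp}{2}$ and recalling $\tau = t_0 - t$ then yields precisely (\ref{eq:ocmmcfthm}), the leftover $+\frac14\int_{\Sigma_t}|(y')^\perp|^2\Phi$ being the unsquared remainder.

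The step I expect to be most delicate is the justification of these two integrations by parts, since $\Sigma_t$ need not be closed: both $\int_{\Sigma_t}\Delta_{\Sigma}\Phi$ and $\int_{\Sigma_t}\div_{\Sigma}(\Phi\,(y')^T)$ vanish only once the contributions near infinity are controlled. This is exactly where the hypothesis $\int_{\Sigma_t}\Phi<\infty$ and the rapid Gaussian decay of $\Phi$ enter; following Ecker, I would insert cut-off functions and pass to the limit, the decay of $\Phi$ and of its first derivatives extinguishing the boundary terms. Finally, for the monotonicity I would discard the nonpositive squared term and estimate $|(y')^\perp|\le|y'|$, pulling the spatially constant factor $|y'(t)|^2$ out of the error integral; (\ref{eq:ocmmcfthm}) then gives the differential inequality $\frac{d}{dt}\int_{\Sigma_t}\Phi \le \frac14|y'(t)|^2\int_{\Sigma_t}\Phi$. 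A Gr\"onwall argument absorbs the nonnegative error term into the exponential integrating factor $\exp\left(\frac14\int_t^{t_0}|y'(\tau)|^2\,\d\tau\right)$, so that $\exp\left(\frac14\int_t^{t_0}|y'(\tau)|^2\,\d\tau\right)\int_{\Sigma_t}\Phi_{y(t),t_0}$ is non-increasing for $t<t_0$.
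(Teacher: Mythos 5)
Your argument is correct and rests on the same two pillars as the paper's proof: the first variation formula (\ref{eq:mcf1stvar}) applied to $\Phi_{y(t),t_0}$, and a Gaussian-weighted divergence-theorem identity to absorb the first-order terms, with Ecker-style cutoffs handling noncompact $\Sigma_t$ under the hypothesis $\int_{\Sigma_t}\Phi_{y,t_0}<\infty$. The difference is only in bookkeeping: the paper completes the square in the exponent first, via (\ref{eq:mcfsq1}), and then applies the divergence theorem once to $\Phi_{y,t_0}X$ with the combined field $X=-\frac{1}{2(t_0-t)}\bigl(x-y-2(t_0-t)y'\bigr)$, whereas you recover Huisken's fixed-centre square by the usual $\int_{\Sigma_t}\Delta_\Sigma\Phi$ integration by parts, convert the new cross term $\int_{\Sigma_t}\frac{\langle x-y,y'\rangle}{2(t_0-t)}\Phi$ into $\int_{\Sigma_t}\langle\vec H+\frac{(x-y)^\perp}{2(t_0-t)},(y')^\perp\rangle\Phi$ using $\div_{\Sigma_t}\bigl(\Phi\,(y')^T\bigr)$ and the identity $\div_{\Sigma_t}\bigl((y')^T\bigr)=\langle y',\vec H\rangle$, and only then complete the square among normal vectors. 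The tangential part of the paper's single field is exactly the sum of your two fields, so the computations are equivalent; your regrouping has the mild advantage of making the provenance of the error term $\frac14|(y')^\perp|^2$ completely transparent.

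One substantive remark: your integrating factor carries the opposite sign to the printed statement, and yours is the correct one. Since (\ref{eq:ocmmcfthm}) gives $F'(t)\le\frac14|y'(t)|^2F(t)$ for $F(t)=\int_{\Sigma_t}\Phi_{y(t),t_0}$, a compensating factor $c(t)$ must satisfy $(\log c)'(t)=-\frac14|y'(t)|^2$, and since the differentiation hits the lower limit, this is $c(t)=\exp\bigl(+\frac14\int_t^{t_0}|y'(\tau)|^2\d\tau\bigr)$, as you wrote. With the minus sign as printed, the factor is non-decreasing in $t$ and cannot absorb the error term; indeed, in the equality case of the subsequent corollary one has $F(t)=C\,e^{|y_0|^2t/4}$, and only the plus-sign factor renders the product constant, as sharpness demands. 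So your Gr\"onwall step silently corrects a sign typo in the theorem's statement (equivalently one could write $\exp\bigl(-\frac14\int_{t_1}^{t}|y'(\tau)|^2\d\tau\bigr)$ for a fixed reference time $t_1$).
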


Of course, if the centre does not change, that is, if $y(t)\equiv x_0$, then one recovers Huisken's monotonicity. The energy functional for curves in $\mathbb{R}^n$ is of course minimised by straight lines; in this case the equality case is easy to interpret:

\begin{corollary}
Fix $x_0,y_0\in\mathbb{R}^n$ and $t_0\in\mathbb{R}$ and let $\{\Sigma_t\}_{t\in I}$ be as above. 

Then for $t\in I$ with $t<t_0$, the quantity $\exp\left(-\frac{|y_0|^2}{4}(t_0-t)\right) \int_{\Sigma_t} \Phi_{x_0 + (t_0-t) y_0, t_0} $ is non-increasing, and is constant if and only if $\Sigma_t$ is a self-shrinking soliton which shrinks to $(x_0,t_0)$ and is orthogonal to $y$. 
\end{corollary}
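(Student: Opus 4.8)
The plan is to obtain the corollary by specialising the moving-centre formula of Theorem~\ref{thm:ocmmcf} to the affine path $y(t) = x_0 + (t_0 - t)\,y_0$, which has constant velocity $y'(t) \equiv -y_0$, so that $|y'(\tau)|^2 \equiv |y_0|^2$. For such a constant-speed centre the integrating factor of Theorem~\ref{thm:ocmmcf} reduces to $\exp\!\big(\pm\tfrac14|y_0|^2(t_0-t)\big)$, which is exactly the weight appearing in the statement; monotonicity of the weighted quantity is then immediate from Theorem~\ref{thm:ocmmcf}, and the only substantive point is the characterisation of the equality case.

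The computation that makes the equality case transparent is the observation that, for this affine choice, the shifted centre in~(\ref{eq:ocmmcfthm}) collapses to the fixed point $x_0$:
\[
x - y(t) - (t_0-t)\,y'(t) \;=\; x - \big(x_0 + (t_0-t)y_0\big) + (t_0-t)\,y_0 \;=\; x - x_0 .
\]
Consequently the first integrand in~(\ref{eq:ocmmcfthm}) is precisely the self-shrinker quantity $\vec{H} + \frac{(x-x_0)^\perp}{2(t_0-t)}$ relative to the fixed spacetime centre $(x_0,t_0)$.

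To isolate equality I would differentiate the weighted quantity by the product rule and combine the logarithmic derivative of the exponential weight, which contributes a multiple of $\tfrac14|y_0|^2\int_{\Sigma_t}\Phi_{y,t_0}$, with the excess term $\tfrac14\int_{\Sigma_t}|(y')^\perp|^2\Phi_{y,t_0}$ of~(\ref{eq:ocmmcfthm}). Since the constant vector $y'\equiv -y_0$ decomposes pointwise on $\Sigma_t$ as $|y_0|^2 = |(y')^\perp|^2 + |(y')^T|^2$, the exponential weight is calibrated so that its contribution combines with this excess term to leave exactly $-\tfrac14\int_{\Sigma_t}|(y')^T|^2\Phi_{y,t_0}$. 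Hence the derivative of the weighted quantity is a sum of two manifestly non-positive integrals, which both reproves monotonicity and shows that constancy on an interval forces both integrands to vanish identically.

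Equality then unwinds cleanly: the vanishing of the first integrand gives $\vec{H} = -\frac{(x-x_0)^\perp}{2(t_0-t)}$, i.e. $\Sigma_t$ is a self-shrinker shrinking to $(x_0,t_0)$, while the vanishing of $|(y')^T|^2$ gives $(y')^T \equiv 0$, i.e. the constant direction $y_0$ (equivalently the velocity $y'$) is everywhere normal to $\Sigma_t$ — the asserted orthogonality. I expect the only real obstacle to be this equality bookkeeping: one must check that the integrating factor is calibrated precisely so that its contribution cancels the full-length term $\tfrac14|y_0|^2\int_{\Sigma_t}\Phi_{y,t_0}$ against the normal-projection term $\tfrac14\int_{\Sigma_t}|(y')^\perp|^2\Phi_{y,t_0}$, since any other weight would leave a residual term of indefinite sign and thereby spoil both the monotonicity and the sharp equality characterisation.
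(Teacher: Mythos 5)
Your strategy is exactly the paper's own: the corollary is a direct specialisation of Theorem \ref{thm:ocmmcf} to the constant-velocity path $y(t)=x_0+(t_0-t)y_0$, and your two key observations --- that the shifted centre in (\ref{eq:ocmmcfthm}) collapses to $x-x_0$, and that the pointwise splitting $|y_0|^2=|(y')^T|^2+|(y')^\perp|^2$ combines the weight's logarithmic derivative with the excess term to leave $-\tfrac14\int_{\Sigma_t}|(y')^T|^2\Phi_{y,t_0}$ --- are precisely the intended ones, as is the resulting characterisation of equality.

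You must, however, resolve your ``$\pm$'' hedge, because it conceals a genuine sign point. Your cancellation requires the weight to have logarithmic derivative $-\tfrac14|y_0|^2$ in $t$, i.e.\ the weight must be (a constant multiple of) $\exp\left(+\tfrac{|y_0|^2}{4}(t_0-t)\right)$; the factor $\exp\left(-\tfrac{|y_0|^2}{4}(t_0-t)\right)$ literally printed in the statement has logarithmic derivative $+\tfrac14|y_0|^2$, and the product rule would then produce the manifestly non-negative contribution $\tfrac14\int_{\Sigma_t}\left(|y_0|^2+|(y_0)^\perp|^2\right)\Phi_{y,t_0}$, which spoils monotonicity rather than yielding two non-positive terms. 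The printed sign is in fact a typo in the paper, and the same typo appears in the final clause of Theorem \ref{thm:ocmmcf}: the monotone weight there should be $\exp\left(+\tfrac14\int_t^{t_0}|y'(\tau)|^2\d\tau\right)$, equivalently $\exp\left(-\tfrac14\int_{t_1}^{t}|y'(\tau)|^2\d\tau\right)$ for any fixed $t_1$. A quick check: for the static line $\Sigma_t=\mathbb{R}\times\{0\}\subset\mathbb{R}^2$ with $x_0=0$, $y_0=e_2$, one computes $\int_{\Sigma_t}\Phi_{(t_0-t)e_2,t_0}=e^{-(t_0-t)/4}$, so the quantity with the printed weight is $e^{-(t_0-t)/2}$, strictly \emph{increasing} in $t$, while with the corrected weight it is constant $\equiv 1$ --- consistent with your equality analysis, since this flow is a static (hence self-shrinking to $(0,t_0)$) plane orthogonal to $y_0$. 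With the sign fixed, your proof is complete and coincides with the paper's.
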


\begin{proof}[Proof of Theorem \ref{thm:ocmmcf}]

For smooth, (spatially) compactly supported test functions $\phi=\phi(x,t)$ the first variation formula for mean curvature flow states that (see for instance \cite[Proposition 4.6]{ecker}) 
\begin{equation}
\label{eq:mcf1stvar}
\frac{d}{dt}\int_{\Sigma_t} \phi = \int_{\Sigma_t} \frac{\pr \phi}{\pr t} +\langle \vec{H}, D\phi\rangle - |\vec{H}|^2\phi.
\end{equation}

First we compute the gradient \begin{equation}\label{eq:mcfdphi}\frac{D\Phi_{y,t_0}}{\Phi_{y,t_0}}=-\frac{x-y}{2(t_0-t)},\end{equation}

Direct computation then yields that 
\begin{eqnarray}
\label{eq:mcfddt}
\frac{\pr \Phi_{y,t_0}}{\pr t} &+&\langle \vec{H}, D\Phi_{y,t_0}\rangle - |\vec{H}|^2\Phi_{y,t_0} \\&=&\nonumber   \Phi_{y,t_0} \left(\frac{k}{2(t_0-t)} -\frac{|x-y|^2}{4(t_0-t)^2}+\frac{\langle x-y,y'\rangle}{2(t_0-t)} -\frac{\langle \vec{H}, x-y\rangle}{2(t_0-t)} - |\vec{H}|^2\right).
\end{eqnarray}
By completing the square we then note that \begin{equation}\label{eq:mcfsq1} -\frac{|x-y|^2}{4(t_0-t)^2}+\frac{\langle x-y,y'\rangle}{2(t_0-t)} = -\frac{1}{4(t_0-t)^2}\left( |x-y-(t_0-t)y'|^2 - (t_0-t)^2|y'|^2\right).\end{equation} 

We will now make use of the divergence theorem (\ref{eq:mcfdivthm}). In particular, for fixed $t$ we set \begin{equation}\label{eq:mcfX} X = -\frac{1}{2(t_0-t)}(x-y-2(t_0-t)y').\end{equation} For this $X$ we compute that \begin{eqnarray}\nonumber \div_{\Sigma_t}(\Phi_{y,t_0} X)&=& \Phi_{y,t_0}\left( \div_{\Sigma_t} X -  \frac{\langle X, (x-y)^T\rangle}{2(t_0-t)} \right)\\&=&\label{eq:mcfdiv} \Phi_{y,t_0}\left(-\frac{k}{2(t_0-t)} + \frac{ |(x-y-(t_0-t)y')^T|^2 - (t_0-t)^2|(y')^T|^2  }{4(t_0-t)^2}\right),\end{eqnarray} where we have used that $x-y = (x-y-(t_0-t)y') + (t_0-t)y'$. 

Combining (\ref{eq:mcfddt} - \ref{eq:mcfdiv}), we thus have 
\begin{eqnarray}
\label{eq:mcfcomb}
\frac{\pr \Phi_{y,t_0}}{\pr t} &+&\langle \vec{H}, D\Phi_{y,t_0}\rangle - |\vec{H}|^2\Phi_{y,t_0} + \div_{\Sigma_t}(\Phi_{y,t_0}X) + \langle \vec{H},\Phi_{y,t_0}X\rangle \\&=&\nonumber     \Phi_{y(t),t_0} \left(  -\frac{|(x-y-(t_0-t)y')^\perp|^2}{4(t_0-t)^2} + \frac{|(y')^\perp|^2}{4} \right. \\&&\nonumber \qquad\qquad\left.- \frac{\langle \vec{H}, x-y-(t_0-t)y'\rangle}{(t_0-t)} - |\vec{H}|^2\right)\\&=&   \nonumber \Phi_{y(t),t_0} \left(\frac{|(y')^\perp|^2}{4} - \left| \vec{H} + \frac{(x-y-(t_0-t)y')^\perp}{2(t_0-t)}\right|^2  \right).\end{eqnarray} 

If the $\Sigma_t$ are compact, then we may immediately apply (\ref{eq:mcf1stvar}) with $\phi=\Phi_{y,t_0}$, and (\ref{eq:mcfdivthm}) to $\Phi_{y,t_0}X$ to conclude the result. If, however, the $\Sigma_t$ are noncompact, then for $R>0$ we select a smooth cutoff function $\chi=\chi_R$ on $\mathbb{R}^n$ such that $\chi_R=1$ on $B_R$, $\chi_R=0$ outside $B_{2R}$, with \begin{equation}\label{eq:mcfbound}R|D\chi_R| + R^2|D^2\chi_R| \leq C_0\end{equation} in between, where $C_0$ is a universal constant. 

Applying (\ref{eq:mcf1stvar}) with $\phi=\chi\Phi_{y,t_0}$ and (\ref{eq:mcfdivthm}) to $\chi \Phi_{y,t_0}X$, we then have
\begin{eqnarray}
\nonumber \frac{d}{dt}\int_{\Sigma_t} \chi\Phi_{y,t_0} &=& \int_{\Sigma_t} \chi\left( \frac{\pr \Phi_{y,t_0}}{\pr t} +\langle \vec{H}, D\Phi_{y,t_0}\rangle - |\vec{H}|^2\Phi_{y,t_0} + \div_{\Sigma_t}(\Phi_{y,t_0}X) + \langle \vec{H},\Phi_{y,t_0}X\rangle\right) \\&&  + \int_{\Sigma_t} \Phi_{y,t_0} \left( \frac{\pr \chi}{\pr t}+ \langle \vec{H},D\chi\rangle + \langle \nabla \chi, X\rangle\right).
\end{eqnarray}
Since $\chi$ is independent of time we of course have $\frac{\pr \chi}{\pr t}=0$. 

Now using the divergence theorem again to $ \Phi_{y,t_0} D\chi$, we have that 
\begin{equation}
 \int_{\Sigma_t} \Phi_{y,t_0} \left( \langle \vec{H},D\chi\rangle + \langle \nabla \chi, X\rangle\right) = \int_{\Sigma_t} \Phi_{y,t_0}\left( - \div_{\Sigma_t} D\chi + \langle \nabla \chi, X-\frac{D\Phi_{y,t_0}}{\Phi_{y,t_0}}\rangle\right).
 \end{equation}
 
We may simplify the last term using (\ref{eq:mcfX}) and (\ref{eq:mcfdphi}), so ultimately
\begin{eqnarray}
\label{eq:ocmmcffinal}
\nonumber \frac{d}{dt}\int_{\Sigma_t} \chi\Phi_{y,t_0} &=& \int_{\Sigma_t} \chi\Phi_{y,t_0}\left( \frac{|(y')^\perp|^2}{4} - \left| \vec{H} + \frac{(x-y-(t_0-t)y')^\perp}{2(t_0-t)}\right|^2 \right) \\&&  + \int_{\Sigma_t} \Phi_{y,t_0} \left(  -\div_{\Sigma_t} D\chi  +\langle \nabla \chi, y'\rangle \right).
\end{eqnarray}

By (\ref{eq:mcfbound}) we may estimate \begin{equation}|\div_{\Sigma_t} D\chi| \leq \frac{kC_0}{R^2}\mathbf{1}_{B_{2R}\setminus B_R},\end{equation}
\begin{equation}
|\langle \nabla \chi,y'\rangle| \leq |y'| \frac{C_0}{R} \mathbf{1}_{B_{2R}\setminus B_R}.
\end{equation}

Since by assumption $\int_{\Sigma_t} \Phi_{y,t_0} <\infty$, as in \cite[Theorem 4.13]{ecker} we may therefore let $R\rightarrow \infty$ to conclude the result. (For instance, one may use the bounds above to move the terms involving $y'$ and $D\chi$ to the left-hand side via an integrating factor, then apply the monotone convergence theorem to justify the remaining integral involving $\vec{H}$.)
\end{proof}

\begin{remark}
Theorem \ref{thm:ocmmcf} holds for Brakke flows as well, except that (\ref{eq:ocmmcfthm}) is instead an upper bound, since the first variation formula (\ref{eq:mcf1stvar}) is also an upper bound for such flows \cite{brakke}. 
\end{remark}

\subsection{Self-shrinkers}
\label{sec:shrinker}

In the study of self-shrinking solitons one may make the normalisation that the soliton flow $\Sigma_t$ shrinks to the origin at time 0. The flow is then determined by any negative time slice - in particular, $\Sigma_t = \sqrt{-t}\Sigma_{-1}$. The time slice $\Sigma = \Sigma_{-1}$ satisfies the elliptic equation $\vec{H} = -\frac{x^\perp}{2}$, and submanifolds satisfying this equation are called self-shrinkers. 

Colding-Minicozzi \cite{CMgeneric} introduced the entropy of a submanifold, defined by 
\begin{equation}
\lambda(\Sigma) = \sup_{x_0,t_0} F_{x_0,t_0}(\Sigma),
\end{equation}
where $F_{x_0,t_0}(\Sigma) = \int_\Sigma \rho_{x_0,t_0}$ is the Gaussian area as in (\ref{eq:gaussianarea}).

Using a moving-centre monotonicity formula, they were able to show that if $\Sigma$ is a self-shrinker, then its entropy is achieved at the natural centre $x_0=0$ and scale $t_0=1$, that is, $\lambda(\Sigma) = F_{0,1}(\Sigma)$. Their monotonicity is as follows:

\begin{proposition}[\cite{CMgeneric}, see also \cite{ketoverzhou}]
Let $\Sigma^k \subset \mathbb{R}^n$ be a self-shrinker and fix $a \in \mathbb{R}$. Then 
\begin{equation}
\frac{d}{ds} F_{sy, 1+as^2}(\Sigma) = -\frac{s}{2(1+as^2)^2}\int_\Sigma |(asx+y)^\perp|^2 \rho_{sy,1+as^2}
\leq  0,
\end{equation}
so long as $1+as^2>0$. In particular the Gaussian area $F_{x_0,t_0}(\Sigma)$ is maximised at $(x_0,t_0)=(0,1)$, and this maximum is strict unless $\Sigma$ is invariant under either dilation or a translation.
\end{proposition}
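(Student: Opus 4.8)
The plan is to differentiate directly under the integral sign, exploiting that here the submanifold $\Sigma$ is \emph{fixed} while only the Gaussian parameters $(x_0,t_0)=(sy,\,1+as^2)$ vary. Abbreviating $t_0=1+as^2$ and $\rho=\rho_{sy,t_0}$, I would first record the logarithmic derivative obtained by the chain rule from $\tfrac{d}{ds}(sy)=y$ and $\tfrac{d}{ds}t_0=2as$,
\begin{equation}
\frac{1}{\rho}\frac{d}{ds}\rho = \frac{\langle x-sy,y\rangle}{2t_0} + \frac{as\,|x-sy|^2}{2t_0^2} - \frac{kas}{t_0}.
\end{equation}
Assuming the relevant integrals are finite (which holds, e.g., under the polynomial volume growth enjoyed by properly embedded self-shrinkers), differentiation under the integral is justified and $\tfrac{d}{ds}F_{sy,t_0}(\Sigma)=\int_\Sigma \tfrac{d}{ds}\rho$.

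Next I would rewrite this integrand using the first variation formula (\ref{eq:mcfdivthm}) on $\Sigma$, which is boundaryless. Applying it to the weighted field $\rho\,(x-sy)$, and using $\div_\Sigma x=k$, $\div_\Sigma y=0$, the gradient identity $\nabla\rho = -\rho\,(x-sy)^T/(2t_0)$, and crucially the self-shrinker equation $\vec H=-x^\perp/2$, one trades the terms involving $|x-sy|^2$ and $\langle x-sy,y\rangle$ for expressions in the normal projections $x^\perp$ and $y^\perp$. As in the proof of Theorem \ref{thm:ocmmcf}, noncompactness is handled by inserting a cutoff $\chi_R$ and letting $R\to\infty$, the resulting error terms being controlled by the finiteness of $F$.

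The crux is then completing the square. I expect the precise scale $t_0=1+as^2$ (equivalently $\dot t_0 = 2as$) together with the centre $sy$ to be exactly what is needed so that, after the integration by parts, \emph{all} tangential contributions cancel and the integrand collapses to $-\tfrac{s}{2t_0^2}\,|(asx+y)^\perp|^2\,\rho$, where $(asx+y)^\perp = as\,x^\perp + y^\perp$. This matching of coefficients into a perfect square of the single normal vector $(asx+y)^\perp$ is the main obstacle: it is the computational heart of the argument, and it is the tuning of the parametrisation $(sy,1+as^2)$ that makes the cancellation possible (any other choice would leave a stray tangential term of indefinite sign, exactly as flagged for the $p$-harmonic case in the introduction).

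Finally I would extract the sign and equality cases. Since $(1+as^2)^2>0$ and replacing $y$ by $-y$ lets me assume $s\ge 0$, the derivative is $\le 0$, so $s\mapsto F_{sy,1+as^2}(\Sigma)$ is nonincreasing with maximum at $s=0$, i.e.\ at $(x_0,t_0)=(0,1)$. As every $(x_0,t_0)$ with $t_0>0$ lies on such a path (take $y=x_0/s$ and $a=(t_0-1)/s^2$), the supremum defining $\lambda(\Sigma)$ is attained there. Equality at some $s\neq 0$ forces $(asx+y)^\perp\equiv 0$ on $\Sigma$, i.e.\ the field $asx+y$ is everywhere tangent to $\Sigma$, so $\Sigma$ is invariant under its flow; this flow is a dilation when $as\neq 0$ and a translation in the direction $y$ (since then $y^\perp\equiv 0$) when $as=0$, which gives precisely the stated rigidity.
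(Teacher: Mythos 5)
Two remarks before the substance. The paper never proves this proposition: it is quoted from \cite{CMgeneric} (see also \cite{ketoverzhou}) in Section \ref{sec:shrinker} purely for the reader's convenience, so your attempt can only be measured against the standard Colding--Minicozzi computation --- which is exactly the route you outline: differentiate in $s$ with $\Sigma$ fixed, trade tangential for normal terms via the first variation identity (\ref{eq:mcfdivthm}) applied to a Gaussian-weighted field together with the shrinker equation $\vec{H}=-\tfrac{1}{2}x^\perp$, then complete the square. Your logarithmic derivative of $\rho$ is correct, your justification of differentiating under the integral and of the cutoff argument parallels the paper's own handling in Theorem \ref{thm:ocmmcf}, and your endgame (reducing $s<0$ to $s\geq 0$ by $y\mapsto -y$, realising every $(x_0,t_0)$ with $t_0>0$ on such a path, and reading off rigidity from tangency of the field $asx+y$, whose flow is a dilation about $-y/(as)$ when $as\neq 0$ and a translation when $a=0$) is sound.

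The genuine shortfall is that you stop at the decisive step: you declare the perfect-square cancellation ``the main obstacle'' and only predict it, and the single test field you name, $\rho\,(x-sy)$, cannot by itself produce it --- its divergence identity controls only the combination $|(x-sy)^T|^2$, whereas $\tfrac{d}{ds}\rho$ involves $\langle x-sy,y\rangle$ independently, so you would need the constant field $\rho\,y$ as a second input. The efficient choice is the single field $\rho\,(asx+y)$. Writing $t_0=1+as^2$ and using $|x-sy|^2=\langle x-sy,x\rangle-s\langle x-sy,y\rangle$ together with $1-\tfrac{as^2}{t_0}=\tfrac{1}{t_0}$, your formula for the logarithmic derivative rearranges to
\begin{equation*}
\frac{d}{ds}F_{sy,t_0}(\Sigma) \;=\; \int_\Sigma \rho\left( \frac{\langle x-sy,\,asx+y\rangle}{2t_0^2} \;-\; \frac{kas}{t_0}\right),
\end{equation*}
while (\ref{eq:mcfdivthm}) applied to $X=\rho\,(asx+y)$, using $D\rho=-\rho\,\frac{x-sy}{2t_0}$ and $\vec{H}=-\tfrac{1}{2}x^\perp$, yields
\begin{equation*}
kas\int_\Sigma\rho \;-\; \frac{1}{2t_0}\int_\Sigma \rho\,\langle (x-sy)^T,\,asx+y\rangle \;=\; \frac{1}{2}\int_\Sigma \rho\,\langle (asx+y)^\perp,\,x^\perp\rangle .
\end{equation*}
Substituting this into the previous display via $\langle x-sy,\,asx+y\rangle=\langle (x-sy)^T,\,asx+y\rangle+\langle (x-sy)^\perp,\,(asx+y)^\perp\rangle$, the $kas$-terms cancel and one is left with
\begin{equation*}
\frac{d}{ds}F_{sy,t_0}(\Sigma) \;=\; \frac{1}{2t_0^2}\int_\Sigma \rho\,\bigl\langle (asx+y)^\perp,\;(x-sy)^\perp - t_0\,x^\perp\bigr\rangle \;=\; -\frac{s}{2t_0^2}\int_\Sigma \rho\,|(asx+y)^\perp|^2 ,
\end{equation*}
since $(x-sy)^\perp - (1+as^2)\,x^\perp = -s\,(asx+y)^\perp$. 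So the cancellation you hoped for is real and, with the right test field, takes two lines; as written, however, your proposal asserts rather than proves the identity that is the entire content of the proposition, and the field you specified would have left exactly the stray term you were worried about. One further refinement you could add for free: non-strict maximality forces the derivative to vanish on a whole interval of $s$, so for $a\neq 0$ one in fact gets $x^\perp\equiv 0$ \emph{and} $y^\perp\equiv 0$, strengthening the single-$s$ conclusion you drew.
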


\section{$p$-Harmonic maps}
\label{sec:harm}

In this section we consider maps from a compact Riemmanian manifold $M^m$ (possibly with boundary) to a manifold $N$, which we assume to be isometrically embedded in $\mathbb{R}^n$. (Note that $n$ is not necessarily the dimension of $N$.) We fix $p \in (1,\infty)$. 

A ($W^{1,2}$) map $u:M\rightarrow N \hookrightarrow \mathbb{R}^n$ is said to be (weakly) $p$-harmonic if it is a (weak) solution of the elliptic equation \begin{equation} \Lap_p(u)=\div(|\nabla u|^{p-2}\nabla u) = -  A_u(\nabla u,\nabla u) ,\end{equation} where $A$ is the second fundamental form of $N$ in $\mathbb{R}^n$. Such maps are the critical points of the $L^p$ energy functional $\mathcal{E}_p(u) = \int_M |\nabla u|^p$. The case $p=2$ corresponds to the usual case of harmonic maps, whilst the limiting case $p\rightarrow 1$ corresponds to the case of minimal hypersurfaces - since if a level set $\{u=a\}$ is a smooth hypersurface, its mean curvature is given by $\div(\frac{\nabla u}{|\nabla u|})$ (see for instance \cite{evans}).

A $p$-harmonic map $u$ is said to be stationary if it additionally satisfies \begin{equation}\label{eq:statharm}\int_M |\nabla u|^p \div X = p \int_M  |\nabla u|^{p-2} \langle \nabla X, \nabla u\otimes \nabla u\rangle  \end{equation} for any smooth, compactly supported vector field $X$ on $M$. Explicitly, in coordinates, the expressions above are $|\nabla u|^2 = u^\alpha_{,i} u^\alpha_{,i}$ and $\langle \nabla X, \nabla u\otimes\nabla u\rangle = X_{i,j} u^\alpha_{,i} u^\alpha_{,j}$. 
For $1<p<2$ we understand products of $|\nabla u|^{p-2}$ with derivatives of $u$ to be zero on the critical set $\{\nabla u =0\}$. 

Note that $A_u(\nabla u,\nabla u)$ is orthogonal to $T_u N$. In particular, even for weak solutions (see \cite{struwe}, or \cite{schoenbook} for the case $p=2$) one has 
\begin{equation}
\label{eq:Aortho}
\int_M \langle A_u(\nabla u,\nabla u), \xi\rangle=0
\end{equation}
for any smooth map $\xi:M\rightarrow \mathbb{R}^n$ such that $\xi(x)\in T_{u(x)} N$ for all $x$. It follows that any smooth harmonic map is automatically stationary, for instance by applying the divergence theorem to the vector field on $M$ given by \begin{equation}\label{eq:ocmharmvfld}|\nabla u|^{p-2} \langle X\cdot \nabla u, \nabla u\rangle - \frac{1}{p}|\nabla u|^p X,\end{equation}  and noting that derivatives of $u$ must be tangent to $N$.

Monotonicity formulae play an important role in the regularity theory of $p$-harmonic maps, the most important case being when $M$ is a bounded domain in $\mathbb{R}^m$. We state the classical monotonicity formula for Euclidean balls $M=B_r^m\subset \mathbb{R}^m$. The statement below is found in \cite{naber}, but versions were proven earlier by other authors including: Schoen-Uhlenback \cite{schoenuhlenbeck} for minimising 2-harmonic maps, Price \cite{price} for stationary 2-harmonic maps and Hardt-Lin \cite{hardtlin} for minimising $p$-harmonic maps.

\begin{theorem}[\cite{naber}] 
Let $u:B_{\bar{r}}^m \rightarrow N$ be a stationary $p$-harmonic map, $1<p<m$. Then for $0<r<\bar{r}$, one has that 
\begin{equation}
\frac{d}{dr} \left( r^{p-m} \int_{B_r} |\nabla u|^p \right) = p r^{p-m} \int_{\pr B_r} |\nabla u|^{p-2}|\pr_r u|^2 = pr^{p-m-2} \int_{\pr B_r} |\nabla u|^{p-2} |x\cdot \nabla u|^2. 
\end{equation}

Equivalently, for $0<r<t<\bar{r}$, we have
\begin{equation}
t^{p-m}\int_{B_t} |\nabla u|^p - r^{p-m} \int_{B_r} |\nabla u|^p = p \int_{B_t\setminus B_r} |x|^{p-m-2} |\nabla u|^{p-2} |x\cdot \nabla u|^2.
\end{equation}

In particular, $r^{p-m} \int_{B_r} |\nabla u|^p$ is non-decreasing in $r$, and is constant if and only if $u$ is homogenous of degree zero. 
\end{theorem}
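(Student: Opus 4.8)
The plan is to carry out a Pohozaev-type argument: apply the stationarity identity \eqref{eq:statharm} to a suitable radial test vector field and then pass to a cutoff limit to localise onto the spheres $\pr B_r$. Since the minimal-submanifold case is morally the $p=1$ case, I expect this to be the $p$-harmonic analogue of the classical fixed-centre computation underlying Theorem \ref{thm:ocmmin} with $y=0$, and I would exploit the same radial structure throughout.

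First I would test \eqref{eq:statharm} with the vector field $X = \phi(|x|)\,x$ on $M = B_{\bar r}^m$, where $\phi$ is a smooth radial cutoff with $\phi \equiv 1$ near the origin and supported in $[0,\bar r)$ (so that $X$ is smooth and compactly supported). Writing $\pr_r u = \frac{x}{|x|}\cdot\nabla u$ for the radial derivative, a direct calculation gives
\begin{equation}
\div X = m\phi(|x|) + |x|\phi'(|x|), \qquad \langle \nabla X, \nabla u\otimes\nabla u\rangle = \phi(|x|)|\nabla u|^2 + |x|\phi'(|x|)|\pr_r u|^2 .
\end{equation}
Substituting into \eqref{eq:statharm} and cancelling the common term $p\int_M \phi(|x|)|\nabla u|^p$ that appears on both sides leaves
\begin{equation}
(m-p)\int_M \phi(|x|)|\nabla u|^p + \int_M |x|\phi'(|x|)|\nabla u|^p = p\int_M |x|\phi'(|x|)|\nabla u|^{p-2}|\pr_r u|^2 .
\end{equation}

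Next I would let $\phi$ decrease to the indicator $\mathbf{1}_{B_r}$ for a fixed $0<r<\bar r$, so that $\phi'$ concentrates as a negative Dirac mass on $\{|x|=r\}$; the coarea formula then converts each $\phi'$-term into a surface integral over $\pr B_r$. With $I(r) := \int_{B_r}|\nabla u|^p$, so that $I'(r)=\int_{\pr B_r}|\nabla u|^p$ by coarea, the limiting identity becomes $rI'(r)-(m-p)I(r) = pr\int_{\pr B_r}|\nabla u|^{p-2}|\pr_r u|^2$. Since the left-hand side equals $r^{m-p+1}\frac{d}{dr}\left(r^{p-m}I(r)\right)$, this is precisely the differential form, and using $|\pr_r u|^2 = r^{-2}|x\cdot\nabla u|^2$ on $\pr B_r$ produces the second expression. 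Integrating in $r$ and applying coarea once more yields the integral form, while the manifest non-negativity of the right-hand integrand gives the monotonicity; equality forces $\pr_r u \equiv 0$ wherever $\nabla u \neq 0$, i.e.\ $u$ is homogeneous of degree zero.

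The main obstacle is making the cutoff limit rigorous for a merely weak solution $u \in W^{1,p}$, rather than the algebra, which is routine. One must verify that the spherical integrals over $\pr B_r$ are finite for a.e.\ $r$ (via Fubini/coarea together with $\int_{B_{\bar r}}|\nabla u|^p < \infty$), justify passing $\phi' \to -\delta_r$ inside each integral, and deal with the factor $|\nabla u|^{p-2}$, which is singular on the critical set $\{\nabla u = 0\}$ when $1<p<2$ --- here one invokes the stated convention that products of $|\nabla u|^{p-2}$ with derivatives of $u$ vanish on that set. With these limiting boundary terms justified, the identity and its consequences follow directly.
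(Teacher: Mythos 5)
Your proof is correct and takes essentially the same approach as the paper: the paper cites this statement from \cite{naber} and proves only the moving-centre generalisation (Theorem \ref{thm:ocmharm1}), but at $y=0$ that proof is exactly your computation --- the coarea formula for $\frac{d}{dr}\int_{B_r}|\nabla u|^p$ combined with stationarity applied to the radial field $X=x$ on the ball, and your cutoff limit $\phi\to\mathbf{1}_{B_r}$ is precisely the approximation argument the paper itself uses to justify the boundary identity (\ref{eq:statharmbd}). The algebra checks out, and your rigor caveats (a.e.\ $r$ via coarea, derivatives in the distributional sense, the convention for $|\nabla u|^{p-2}$ on $\{\nabla u=0\}$ when $1<p<2$) match the paper's stated conventions.
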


We use the technique of Theorem \ref{thm:ocmmin} (that is, morally, the $p=1$ case) to provide a family of moving-centre monotonicity formulae for stationary $p$-harmonic maps. For $p>1$ there is an excess term involving derivatives of $u$ in the $y$-direction, which can be handled by an explicit correction term, or by adjusting the family of balls. In the latter case, the motion of the centre must be constrained. Interpolating between these approaches gives rise to our family of monotonicity formulae.

\begin{definition}
Choose $q \in [1,p]$. Then for fixed $y\in B^m_{q^{-1/2}}$, we may define the nested family of balls \begin{equation} \label{eq:hmheatEs} E^{(q)}_s = B^m\left(sy, R_q(s)\right) \subset \mathbb{R}^m,\end{equation} 
where $R_q(s)=\sqrt{s(1-q|y|^2)+s^2q|y|^2}$. 

Note that for $q=1$, this family differs from Definition \ref{defn:Eballs} only by a rigid motion; instead of starting centred near $y$ and expanding through $B(0,1)$, here the balls $E^{(1)}_s$ will start centred near 0 and expand through $E^{(1)}_1 = B(y,1)$. For $q>1$ the $E^{(q)}_s$ foliate $\mathbb{R}^m$, whilst for $q=1$ they foliate the half-space defined by $\langle x,y\rangle > \frac{|y|^2-1}{2}$. 

As sub-level sets, we note that $E^{(q)}_s=\{0\leq f_q<s\}$ for \begin{equation}\label{eq:hmheatdeff} f_q(x)= \begin{cases} \frac{|x|^2}{1+2\langle x,y\rangle -|y|^2} &, q=1\\ \frac{-(1-q|y|^2 +2\langle x,y\rangle) + \sqrt{(1-q|y|^2 +2\langle x,y\rangle)^2 + 4(q-1)|x|^2|y|^2}}{2(q-1)|y|^2}&,q>1\end{cases}.\end{equation} 
\end{definition}

Our moving-centre monotonicity formulae for $p$-harmonic maps are then as follows:
\begin{theorem}
\label{thm:ocmharm1}
Let $p\in(1,m)$ and fix $q\in [1,p]$, $y\in B^m_{q^{-1/2}}$. Further, fix $\bar{s} \in (0,\infty)$ and let $u:E^{(q)}_{\bar{s}}\rightarrow N$ be a stationary $p$-harmonic map. Then for $0<s<\bar{s}$, one has that 
\begin{eqnarray}
\label{eq:ocmharmdiff}
\nonumber \frac{d}{ds}\left(s^{\frac{p-m}{2}}\int_{E_s^{(q)}} |\nabla u|^p \right) &=& \frac{qs^\frac{p-m+2}{2}}{2R_q(s)}\int_{\pr E_s^{(q)}} |\nabla u|^{p-2} (|y|^2 |\nabla u|^2 - |y\cdot \nabla u|^2) \\&&+  \frac{s^\frac{p-m-2}{2}}{2R_q(s)}\int_{\pr E_s^{(q)}} |\nabla u|^{p-2} \left({p|x\cdot \nabla u|^2-(p-q)s^2|y\cdot\nabla u|^2}\right).
\end{eqnarray}

Equivalently, for $0<s<t<\bar{s}$, we have 
\begin{eqnarray} 
\label{eq:ocmharmint}
\nonumber t^\frac{p-m}{2}\int_{E_t^{(q)}} |\nabla u|^p  &-& s^{\frac{p-m}{2}}\int_{E_s^{(q)}} |\nabla u|^p \\&=&q\int_{E_t^{(q)}\setminus E_s^{(q)}} |\nabla u|^{p-2} f^\frac{p-m+4}{2} \left(\frac{|y|^2|\nabla u|^2 - |y\cdot \nabla u|^2}{|x|^2+(q-1)f^2|y|^2}\right)  \nonumber \\&& + \int_{E_t^{(q)}\setminus E_s^{(q)}} |\nabla u|^{p-2} f^\frac{p-m}{2}\left(\frac{p|x\cdot \nabla u|^2-(p-q)f^2|y\cdot\nabla u|^2}{|x|^2+(q-1)f^2|y|^2}\right).  
\end{eqnarray}
\end{theorem}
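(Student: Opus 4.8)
The plan is to transpose the proof of Theorem~\ref{thm:ocmmin} to the $p$-harmonic setting, with the energy $\int_{E^{(q)}_s}|\nabla u|^p$ playing the role of area. Two ingredients are needed: an explicit formula for $\nabla f_q$ on the level sets $\pr E^{(q)}_s$ (the analogue of Lemma~\ref{lem:mindf}), and a way to rewrite the bulk energy $\int_{E^{(q)}_s}|\nabla u|^p$ as a boundary integral over $\pr E^{(q)}_s$. In the minimal case the latter came from the plain divergence theorem; here it must instead come from the stationarity identity~(\ref{eq:statharm}), which is what injects the factor $p$. Feeding both ingredients, together with the coarea formula, into the $s$-derivative of $s^{\frac{p-m}{2}}\int_{E^{(q)}_s}|\nabla u|^p$ will produce the differential identity~(\ref{eq:ocmharmdiff}); a second application of coarea then gives the integral form~(\ref{eq:ocmharmint}).

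First I would compute $\nabla f_q$ without recourse to the explicit formula~(\ref{eq:hmheatdeff}), using only that $\pr E^{(q)}_s$ is the sphere of centre $sy$ and radius $R_q(s)$. Writing $\rho_q(s)=R_q(s)^2=s(1-q|y|^2)+s^2 q|y|^2$, the function $f_q$ is characterised by $|x-f_q y|^2=\rho_q(f_q)$, and implicit differentiation expresses $\nabla f_q$ with a denominator $\rho_q'(f_q)+2\langle x-f_q y, y\rangle$. The radius $R_q$ has been chosen precisely so that $\rho_q$ obeys $s\,\rho_q'(s)=\rho_q(s)+s^2 q|y|^2$, and substituting this back through the defining relation collapses the denominator to give
\begin{equation}
\frac{\nabla f_q}{2f_q}=\frac{x-f_q y}{|x|^2+(q-1)f_q^2|y|^2}.
\end{equation}
In particular, on $\pr E^{(q)}_s$ the outward unit normal is $\nu=(x-sy)/R_q(s)$ and $|\nabla f_q|=2sR_q(s)/(|x|^2+(q-1)s^2|y|^2)$. (The hypotheses $q\in[1,p]$ and $y\in B^m_{q^{-1/2}}$ serve only to ensure $1-q|y|^2>0$, so that this family of balls is well defined.)

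For the bulk-to-boundary step I would apply the stationarity identity~(\ref{eq:statharm}) with $X=\eta\,(x+sy)$, where $s$ is momentarily fixed and $\eta$ is a cutoff supported in $E^{(q)}_s$ increasing to $\mathbf{1}_{E^{(q)}_s}$. Since $\div(x+sy)=m$ and $\nabla(x+sy)=\id$, passing to the limit (so that $\nabla\eta$ concentrates on $\pr E^{(q)}_s$ as $-\nu$ times surface measure) yields
\begin{equation}
(m-p)\int_{E^{(q)}_s}|\nabla u|^p=\int_{\pr E^{(q)}_s}|\nabla u|^p\langle\nu,x+sy\rangle-p\int_{\pr E^{(q)}_s}|\nabla u|^{p-2}\langle(x+sy)\cdot\nabla u,\nu\cdot\nabla u\rangle.
\end{equation}
Coarea gives $\frac{d}{ds}\int_{E^{(q)}_s}|\nabla u|^p=\int_{\pr E^{(q)}_s}|\nabla u|^p/|\nabla f_q|$. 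Substituting both into the expansion of $\frac{d}{ds}\big(s^{\frac{p-m}{2}}\int_{E^{(q)}_s}|\nabla u|^p\big)$, inserting $\nu=(x-sy)/R_q(s)$ and $|\nabla f_q|$ from above, and collecting terms reduces everything to $\frac{s^{(p-m-2)/2}}{2R_q(s)}$ times a single boundary integral over $\pr E^{(q)}_s$ with integrand $|\nabla u|^{p-2}\big[(D-\langle x-sy,x+sy\rangle)|\nabla u|^2+p\langle(x+sy)\cdot\nabla u,(x-sy)\cdot\nabla u\rangle\big]$, where $D=|x|^2+(q-1)s^2|y|^2$.

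The crucial point is that $x+sy$ is exactly the field for which this collapses: since $D-\langle x-sy,x+sy\rangle=D-(|x|^2-s^2|y|^2)=qs^2|y|^2$ and $\langle(x+sy)\cdot\nabla u,(x-sy)\cdot\nabla u\rangle=|x\cdot\nabla u|^2-s^2|y\cdot\nabla u|^2$, the integrand becomes $|\nabla u|^{p-2}\big(qs^2|y|^2|\nabla u|^2+p|x\cdot\nabla u|^2-ps^2|y\cdot\nabla u|^2\big)$, which is precisely the regrouping displayed in~(\ref{eq:ocmharmdiff}). Integrating this differential identity in $s$ and applying the coarea formula a second time, with the values of $|\nabla f_q|$ and $R_q$ recorded above, turns the boundary integrals back into the weighted bulk integral~(\ref{eq:ocmharmint}); the powers match because $\frac{p-m}{2}+2=\frac{p-m+4}{2}$. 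I expect the main obstacle to be identifying the correct vector field. Unlike the plain divergence theorem, the stationarity identity loads the factor $p$ only onto the term $\langle\nabla X,\nabla u\otimes\nabla u\rangle$, so one must use the offset field $x+sy$ — rather than the radial field $x-sy$ emanating from the centre — for the $|\nabla u|^2$ and $|x\cdot\nabla u|^2$ contributions to assemble into the stated form. A secondary technical matter is the standard treatment of the critical set $\{\nabla u=0\}$ for $p<2$ and the convergence of the cutoff limit, for which the finiteness of $\int_{E^{(q)}_{\bar s}}|\nabla u|^p$ suffices.
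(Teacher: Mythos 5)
Your proposal is correct and follows essentially the same route as the paper's own proof: the same implicit computation of $\nabla f_q$ from $|x-f_q y|^2=\rho_q(f_q)$ using the ODE $s\rho_q'(s)=\rho_q(s)+qs^2|y|^2$, the same boundary form of stationarity (the paper's (\ref{eq:statharmbd}), justified by the same cutoff limit you describe), the same choice $X=x+sy$ with polarisation, and the coarea formula applied twice to pass between the differential and integral forms. (Your only slight misstatement is the parenthetical about hypotheses: $q\geq 1$ also guarantees the balls $E^{(q)}_s$ are nested so that $f_q$ is single-valued, and $q\leq p$ is needed for the signs in the monotonicity corollaries, not merely for $1-q|y|^2>0$.)
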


Bounding the $y\cdot \nabla u$ terms using Cauchy-Schwarz yields the monotone quantities as follows:

\begin{corollary}
\label{cor:ocmharm}
Let $p\in(1,m)$ and fix $q\in (1,p]$, $y\in B^m_{q^{-1/2}}$. Let $u:E^{(q)}_{\bar{s}}\rightarrow N$ be a stationary $p$-harmonic map. Then for $0<s<\bar{s}$, the quantity 
\begin{equation}
s^{\frac{q-1}{p-1}\frac{p-m}{2}}\int_{E_s^{(q)}} |\nabla u|^p 
\end{equation}
is non-decreasing. 
\end{corollary}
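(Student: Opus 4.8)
The plan is to deduce the corollary directly from the differential identity (\ref{eq:ocmharmdiff}) of Theorem \ref{thm:ocmharm1}, handling the sign-indefinite term by Cauchy--Schwarz at the cost of lowering the scaling exponent. Write $\mu=\frac{p-m}{2}<0$, $\alpha=\frac{q-1}{p-1}\in(0,1]$ and abbreviate $\Phi(s)=\int_{E_s^{(q)}}|\nabla u|^p$. In the first integral on the right of (\ref{eq:ocmharmdiff}) the integrand $|y|^2|\nabla u|^2-|y\cdot\nabla u|^2$ is nonnegative by Cauchy--Schwarz, and the part $p|x\cdot\nabla u|^2$ of the second integral is nonnegative; the only bad contribution is $-(p-q)s^2|y\cdot\nabla u|^2$, which is nonpositive since $q\le p$. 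Observe that at $q=p$ this term vanishes and (\ref{eq:ocmharmdiff}) already gives monotonicity of $s^\mu\Phi$, whereas for $q$ near $1$ the exponent $\alpha\mu$ degenerates and $\Phi$ is monotone by inclusion; the content of the corollary is the precise interpolation $\alpha=\frac{q-1}{p-1}$ between these endpoints.

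First I would differentiate the modified quantity and insert (\ref{eq:ocmharmdiff}), using
\[
\frac{d}{ds}\left(s^{\alpha\mu}\Phi\right)=s^{(\alpha-1)\mu}\,\frac{d}{ds}\left(s^{\mu}\Phi\right)+(\alpha-1)\mu\, s^{\alpha\mu-1}\Phi .
\]
Combining the two boundary integrals of (\ref{eq:ocmharmdiff}) into $\tfrac{s^{\mu-1}}{2R_q}\int_{\pr E_s^{(q)}}|\nabla u|^{p-2}B$, where $B=qs^2|y|^2|\nabla u|^2+p|x\cdot\nabla u|^2-ps^2|y\cdot\nabla u|^2$, reduces the claim (after dividing by $s^{\alpha\mu-1}>0$) to the pointwise-in-$s$ inequality
\[
(\alpha-1)\mu\,\Phi+\frac{1}{2R_q}\int_{\pr E_s^{(q)}}|\nabla u|^{p-2}B\ \ge\ 0 .
\]
Here $(\alpha-1)\mu\ge0$, so the first term is a nonnegative bulk contribution; since $B$ is sign-indefinite, the crux is to compare the bulk energy $\Phi$ against a boundary integral.

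This comparison is supplied by stationarity. Applying the divergence theorem to the field (\ref{eq:ocmharmvfld}) with the radial choice $X=x-sy$ (for which $\nabla X=\id$, $\div X=m$, and which is divergence-free for stationary $u$) on the domain $E_s^{(q)}$ converts the bulk energy into a boundary integral,
\[
\mu\,\Phi=\frac{1}{2R_q}\int_{\pr E_s^{(q)}}\left(p\,|\nabla u|^{p-2}\,|(x-sy)\cdot\nabla u|^2-R_q^2\,|\nabla u|^p\right),
\]
using that on $\pr E_s^{(q)}$ the outward normal is $\nu=(x-sy)/R_q$ (the analogue of Lemma \ref{lem:mindf} for $f_q$) and that $|x-sy|=R_q$ there. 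Substituting this makes the desired inequality the nonnegativity of a single boundary integrand quadratic in $\nabla u$. The expansion of $|(x-sy)\cdot\nabla u|^2$ produces a cross term $\langle x\cdot\nabla u,y\cdot\nabla u\rangle$ absent from the final formula, which I would eliminate by adding the multiple $-2(1-\alpha)s$ of the stationarity identity (\ref{eq:statharm}) with the \emph{constant} field $X=y$ (its bulk side vanishes, so it integrates to zero on $\pr E_s^{(q)}$).

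After this cancellation, Cauchy--Schwarz $|y\cdot\nabla u|^2\le|y|^2|\nabla u|^2$ and discarding $\alpha p\,|x\cdot\nabla u|^2\ge0$ leave the coefficient of $|\nabla u|^2$ equal to
\[
(1-\alpha)R_q^2+(q-\alpha p)\,s^2|y|^2+2(1-\alpha)\,s\langle x-sy,y\rangle .
\]
The main obstacle, and the reason for the exponent $\alpha=\frac{q-1}{p-1}$, is to see that this expression is nonnegative: precisely this choice forces $q-\alpha p=1-\alpha$, whereupon the level-set relation $|x|^2-R_q^2=2s\langle x,y\rangle-s^2|y|^2$ on $\pr E_s^{(q)}$ collapses the bracket to $(1-\alpha)|x|^2\ge0$. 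This yields the inequality of the previous paragraph and hence the claimed monotonicity. For weak solutions the two applications of (\ref{eq:statharm}) are justified by the same cutoff and approximation argument used in the proof of Theorem \ref{thm:ocmharm1}.
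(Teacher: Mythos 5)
Your argument is correct: I checked that your boundary representation $\mu\Phi=\frac{1}{2R_q}\int_{\pr E_s^{(q)}}\bigl(p|\nabla u|^{p-2}|(x-sy)\cdot\nabla u|^2-R_q^2|\nabla u|^p\bigr)$ is exactly (\ref{eq:statharmbd}) with $X=x-sy$, that $q-\alpha p=1-\alpha$ holds precisely for $\alpha=\frac{q-1}{p-1}$, and that the level-set relation collapses the coefficient of $|\nabla u|^2$ to $(1-\alpha)|x|^2$, leaving the manifestly nonnegative integrand $(1-\alpha)|x|^2|\nabla u|^2+\alpha p\,|x\cdot\nabla u|^2$. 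Your route differs from the paper's in organization rather than substance. The paper does not argue from the statement of Theorem \ref{thm:ocmharm1}: it works inside that theorem's proof, bounding the polarized identity (\ref{eq:ocmharmpol}) by Cauchy--Schwarz on the $y\cdot\nabla u$ terms and then by $s^2|y|^2\le\frac{1}{q-1}\bigl(|x|^2+(q-1)s^2|y|^2\bigr)$, and recognizing both surviving boundary integrals as $2s\frac{d}{ds}\int_{E_s}|\nabla u|^p$ via the coarea formula (\ref{eq:harmcoarea}); this yields $(m-p)\int_{E_s}|\nabla u|^p\le 2s\,\frac{p-1}{q-1}\frac{d}{ds}\int_{E_s}|\nabla u|^p$, which is your differential inequality rearranged. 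You instead treat (\ref{eq:ocmharmdiff}) as a black box and re-import stationarity through $X=x-sy$ plus the constant field $y$; since $x+sy=(x-sy)+2sy$, these are the same ingredients as the paper's choice $X=x+sy$, recombined with weight $2(1-\alpha)s$ on the constant-field identity instead of $2s$, and both proofs ultimately discard the same three nonnegative quantities (the Cauchy--Schwarz deficit $|y|^2|\nabla u|^2-|y\cdot\nabla u|^2$, a multiple of $|x\cdot\nabla u|^2$, and a multiple of $|x|^2|\nabla u|^2$), so the equality analyses agree. What your version buys is a self-contained deduction from the stated theorem and a transparent identification of why the exponent is $\frac{q-1}{p-1}$; what the paper's buys is brevity, since it reuses its intermediate identities. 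One slip of prose: the field (\ref{eq:ocmharmvfld}) with $X=x-sy$ is \emph{not} divergence-free --- by the $p$-harmonic equation its divergence is $\frac{p-m}{p}|\nabla u|^p$, which is exactly why the bulk term $\mu\Phi$ survives in your display --- but this misstatement contradicts only your parenthetical, not the (correct) identity you actually use; similarly the sign of the multiple $-2(1-\alpha)s$ is immaterial, as the constant-field identity is an equality with zero.
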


In particular, when $q=p$, we get the sharper statement:

\begin{corollary}
\label{cor:ocmharm2}
Let $p\in(1,m)$ and fix $y\in B^m_{p^{-1/2}}$. Let $u:E^{(p)}_{\bar{s}}\rightarrow N$ be a stationary $p$-harmonic map. Then for $0<s<\bar{s}$, the quantity \begin{equation}
s^\frac{p-m}{2} \int_{E^{(p)}_s} |\nabla u|^p   
\end{equation}
is non-decreasing. If $y\neq 0$, then this quantity is constant if and only if $u$ is a constant map; if $y=0$ then it is constant if and only if $u$ is homogenous of degree zero. 
\end{corollary}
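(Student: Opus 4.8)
The plan is to specialise Theorem \ref{thm:ocmharm1} to the boundary case $q=p$, where the formula simplifies so much that both the monotonicity and its equality case can be read off directly, without the Cauchy--Schwarz estimate that was used to prove Corollary \ref{cor:ocmharm}. First I would substitute $q=p$ into the differential identity (\ref{eq:ocmharmdiff}). The decisive point is that the coefficient $(p-q)$ multiplying the potentially troublesome term $s^2|y\cdot\nabla u|^2$ vanishes identically, so no correcting factor and no scale adjustment is needed; this is precisely why $q=p$ yields the sharper statement. The right-hand side then becomes the sum of
\[
\frac{ps^{\frac{p-m+2}{2}}}{2R_p(s)}\int_{\pr E_s^{(p)}} |\nabla u|^{p-2}\bigl(|y|^2|\nabla u|^2-|y\cdot\nabla u|^2\bigr)
\]
and
\[
\frac{ps^{\frac{p-m-2}{2}}}{2R_p(s)}\int_{\pr E_s^{(p)}} |\nabla u|^{p-2}|x\cdot\nabla u|^2 .
\]

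Both integrands are manifestly non-negative: the second is a product of $|\nabla u|^{p-2}\ge 0$ (with the stated convention on the critical set when $1<p<2$) and a square, while for the first the pointwise Cauchy--Schwarz inequality $|y\cdot\nabla u^\alpha|^2\le |y|^2|\nabla u^\alpha|^2$ on $M$, summed over the target index $\alpha$, gives $|y|^2|\nabla u|^2-|y\cdot\nabla u|^2\ge 0$. Since the prefactors $s^{(\cdot)}/R_p(s)$ are positive, the quantity $s^{\frac{p-m}{2}}\int_{E_s^{(p)}}|\nabla u|^p$ is non-decreasing. Equivalently, one reads this off the integral form (\ref{eq:ocmharmint}) with $q=p$, whose two integrands are non-negative by the same reasoning.

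For the equality case, I would use that the monotone quantity is constant on $(0,\bar s)$ precisely when both of the non-negative integrands above vanish for almost every $s$, hence, via (\ref{eq:ocmharmint}) and the coarea formula, identically a.e. on the (connected) ball $E_{\bar s}^{(p)}$, away from the single point $\{f_p=0\}=\{0\}$. This produces two pointwise conditions: $x\cdot\nabla u=0$ and $|y|^2|\nabla u|^2=|y\cdot\nabla u|^2$. The first says the radial derivative of $u$ about the origin vanishes, i.e. $u$ is homogeneous of degree zero; if $y=0$ the second condition is vacuous and this is the entire conclusion. If $y\ne 0$, equality in the componentwise Cauchy--Schwarz inequality forces each $\nabla u^\alpha$ to be parallel to $y$, so that $u^\alpha=v^\alpha(\xi)$ depends only on $\xi:=\langle x,\hat y\rangle$ with $\hat y=y/|y|$; substituting into $x\cdot\nabla u=0$ gives $(v^\alpha)'(\xi)\,\xi=0$, whence $(v^\alpha)'\equiv 0$ for $\xi\ne 0$ and therefore $\nabla u=0$ a.e. As $E_{\bar s}^{(p)}$ is connected, $u$ is then a constant map, as claimed.

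The main obstacle will be the equality-case bookkeeping rather than the monotonicity, which is immediate once $q=p$ is substituted and the $(p-q)$ term drops out. In particular one must handle the critical set $\{\nabla u=0\}$ and the convention for $|\nabla u|^{p-2}$ when $1<p<2$, and justify extracting the two pointwise identities at the level of possibly only Sobolev-regular stationary solutions before passing to constancy. The geometry of the balls $E_s^{(p)}$, which foliate all of $\mathbb{R}^m$ when $q=p>1$, is exactly what guarantees the identities hold throughout a connected domain and thereby upgrades "$u$ depends only on $\langle x,\hat y\rangle$ and is radially constant'' to $u$ being constant.
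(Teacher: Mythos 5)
Your proposal is correct and takes essentially the same route as the paper: the paper proves Corollaries \ref{cor:ocmharm} and \ref{cor:ocmharm2} together by applying Cauchy--Schwarz to the polarised identity (\ref{eq:ocmharmpol}) and noting that when $q=p$ the $(p-q)$ term vanishes so nothing is lost in the estimate, which is exactly your observation that at $q=p$ both terms on the right of (\ref{eq:ocmharmdiff}) are manifestly non-negative. Your equality-case analysis also matches the paper's: constancy forces $x\cdot\nabla u\equiv 0$ (degree-zero homogeneity) and $|y\cdot\nabla u|=|y||\nabla u|$ (each $\nabla u^\alpha$ parallel to $y$, so $u$ is constant on hyperplanes orthogonal to $y$), and for $y\neq 0$ the two conditions together force $u$ to be constant.
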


Our proof will use the coarea formula, and so we will also need the version of (\ref{eq:statharm}) for domains with boundary. Namely, let $\Omega \subset \mathbb{R}^m$ be a smooth bounded domain and let $X$ be a smooth vector field on $\Omega$. It follows from (\ref{eq:statharm}) that \begin{eqnarray}\label{eq:statharmbd} \int_\Omega |\nabla u|^{p-2}\left(|\nabla u|^2\div X - p \langle \nabla X, \nabla u\otimes \nabla u\rangle \right)\\\nonumber = \int_{\pr \Omega} |\nabla u|^{p-2}\left(|\nabla u|^2 X\cdot \nu - p\langle X\cdot \nabla u, \nu\cdot\nabla u\rangle\right)  ,\end{eqnarray} where $\nu$ is the outward unit normal to $\pr \Omega$. [One way to see this is to define $\varphi$ by $\varphi(t) = t/\epsilon$ for $0\leq t\leq \epsilon$, $\phi(t)=1$ for $t\geq \epsilon$, and let $\wt{\varphi}$ be a smooth approximation of $\varphi$. Apply (\ref{eq:statharm}) to $\wt{\varphi}(d(x,\pr\Omega)) X$ on $\Omega$ and let $\epsilon\rightarrow0$ using the coarea formula. For regular maps one may instead directly apply the divergence theorem with boundary to (\ref{eq:ocmharmvfld}).] 

\begin{proof}[Proof of Theorem \ref{thm:ocmharm1}]
For the proof we will suppress the dependence on $q$, that is, we fix $q$ and work with $E_s = E^{(q)}_s$, $f=f_q$, $R=R_q$. 

Since, by construction, the level sets of $f$ are the spheres $\pr E_s$ with centre and radius as in (\ref{eq:hmheatEs}), the function $f$ satisfies
\begin{equation}
\label{eq:centreradius}
|x-fy|^2 =R(f)^2=  f(1-q|y|^2) + f^2q|y|^2.
\end{equation}

It will be useful to record the expanded form \begin{equation}\label{eq:hmheatDs2}|x|^2 + (q-1)f^2|y|^2 = 2f\langle x,y\rangle+f(1-q|y|^2) + 2f^2(q-1)|y|^2.\end{equation} 

The outward unit normal $\nu$ of $\pr E_s$ with respect to $E_s$ is given by \begin{equation}\label{eq:harmnormal}\nu = \frac{x-sy}{|x-sy|} = \frac{x-sy}{R(s)}.\end{equation} But the gradient of $f$ on each $\pr E_s$ must be proportional to $\nu$, so implicitly differentiating using (\ref{eq:centreradius}) and then (\ref{eq:hmheatDs2}), we find that \begin{equation}\label{eq:hmheatDf}\frac{1}{2}\nabla f = \frac{x-fy}{1-q|y|^2 + 2\langle x,y\rangle+2(q-1)f |y|^2} = \frac{f(x-fy)}{|x|^2+(q-1)f^2|y|^2}.\end{equation} (One may also verify this directly using the explicit form of $f$.)

Taking the norm of both sides and using (\ref{eq:centreradius}) yields 
\begin{equation}
\label{eq:ocmharmnormdf}
|\nabla f| = \frac{2f R(f)}{|x|^2+(q-1)f^2|y|^2}. 
\end{equation}

The coarea formula gives that
\begin{equation}
\label{eq:harmcoarea}
\frac{d}{ds} \int_{E_s} |\nabla u|^p = \int_{\{f=s\}} \frac{|\nabla u|^p}{|\nabla f|} =\frac{1}{2sR(s)} \int_{\{f=s\}} |\nabla u|^p \left(|x|^2+(q-1)s^2|y|^2\right).
\end{equation}

But for fixed $s$, by the stationarity (\ref{eq:statharmbd}), for any vector field $X$ on $E_s$ which satisfies \begin{equation}\nabla_i X_j = \delta_{i,j},\end{equation} using the formula (\ref{eq:harmnormal}) for the normal we find that
\begin{equation}
(m-p)\int_{E_s} |\nabla u|^p =  \frac{1}{R(s)}\int_{\{f=s\}} |\nabla u|^{p-2}  \left( |\nabla u|^2 X\cdot (x-sy) - p\langle X\cdot \nabla u, (x-sy)\cdot\nabla u\rangle\right),
\end{equation}

Choosing $X=x+sy$ and polarising both terms on the right, we have 

\begin{eqnarray}
\nonumber (m-p)\int_{E_s} |\nabla u|^p &=& \frac{1}{R(s)}\int_{\{f=s\}} |\nabla u|^{p-2} \left( |\nabla u|^2 (|x|^2-s^2|y|^2) - p|x\cdot\nabla u|^2+ps^2|y\cdot\nabla u|^2 \right)
\\\nonumber&=&  \frac{1}{R(s)} \int_{\{f=s\}}|\nabla u|^p\left(|x|^2+(q-1)s^2|y|^2\right) \\&&\nonumber + \frac{qs^2}{R(s)}\int_{\{f=s\}}|\nabla u|^{p-2} \left(|y\cdot \nabla u|^2 - |y|^2|\nabla u|^2\right)  \\&& +\frac{1}{R(s)}\int_{\{f=s\}} |\nabla u|^{p-2} \left((p-q)s^2|y\cdot\nabla u|^2-p|x\cdot \nabla u|^2\right) .
\label{eq:ocmharmpol}
\end{eqnarray}

Using (\ref{eq:harmcoarea}) for the first term on the right and rearranging gives that

\begin{eqnarray}
\nonumber \frac{d}{ds}\left(s^{\frac{p-m}{2}}\int_{E_s} |\nabla u|^p \right) &=& \frac{qs^\frac{p-m+2}{2}}{2R(s)}\int_{\{f=s\}}|\nabla u|^{p-2} \left( |y|^2|\nabla u|^2-|y\cdot \nabla u|^2 \right)  \\ && +\frac{s^\frac{p-m-2}{2}}{2R(s)}\int_{\{f=s\}} |\nabla u|^{p-2} \left(p|x\cdot \nabla u|^2-(p-q)s^2|y\cdot\nabla u|^2\right).
\end{eqnarray}

This is the stated monotonicity formula in differential form (\ref{eq:ocmharmdiff}). On the other hand, using (\ref{eq:ocmharmnormdf}) and integrating using the coarea formula again will give the integral form (\ref{eq:ocmharmint}).

\end{proof}

\begin{proof}[Proof of Corollaries \ref{cor:ocmharm} and \ref{cor:ocmharm2}]
Bounding $|y\cdot \nabla u|^2\leq |y|^2 |\nabla u|^2$ in (\ref{eq:ocmharmpol}), and using (\ref{eq:harmcoarea}), we have that
\begin{eqnarray}
\nonumber (m-p) \int_{E_s}|\nabla u|^p &\leq& 2s \frac{d}{ds} \int_{E_s} |\nabla u|^p + \frac{p-q}{R(s)} \int_{\pr {E_s}} |\nabla u|^p s^2|y|^2 \\&\leq&2s \frac{d}{ds} \int_{E_s} |\nabla u|^p + \frac{p-q}{q-1} \frac{1}{R(s)} \int_{\pr {E_s}} |\nabla u|^p \left(|x|^2+(q-1)s^2|y|^2\right) \nonumber \\&=& 2s\, \frac{p-1}{q-1} \frac{d}{ds} \int_{E_s} |\nabla u|^p.
\end{eqnarray}
This implies the stated monotonicity. If $p=q$, then we did not lose anything in the second inequality above; therefore equality holds if and only if $x\cdot \nabla u\equiv 0$ and $|y\cdot\nabla u| = |y| |\nabla u|$, that is, if $y$ is parallel to $\nabla u^\alpha$ for each $\alpha$. The first condition implies that $u$ is homogenous of degree zero. If $y\neq 0$ then the second condition implies that $u$ is constant on lines orthogonal to $y$, which combined with the first forces $u$ to be constant. 
\end{proof}

\section{Harmonic map heat flow}
\label{sec:hmheat}

The harmonic map heat flow involves deforming a map $u:M^m \rightarrow N \hookrightarrow \mathbb{R}^n$ by the parabolic equation
\begin{equation}
\label{eq:hmheateq}
\pr_t u = \Lap u + A_u (\nabla u,\nabla u).
\end{equation}
Again $A$ is the second fundamental form of $N$ in $\mathbb{R}^n$. 

Struwe \cite{struwe} discovered a Gaussian-weighted monotonicity for regular solutions to (\ref{eq:hmheateq}) on $M=\mathbb{R}^m$. In this section we use $\Phi$ to denote the kernel \begin{equation}\Phi_{x_0,t_0}(x,t)= (4\pi (t_0-t))^{-\frac{m-2}{2}} \exp\left(- \frac{|x-x_0|^2}{4(t_0-t)}\right),\end{equation} and we denote $\mathbb{R}^m_t = \mathbb{R}^m \times \{t\} \subset \mathbb{R}^m \times I$. 

\begin{theorem}[\cite{struwe}]
Fix $x_0 \in \mathbb{R}^m$, $t_0\in I$. 
Let $u:\mathbb{R}^m\times I \rightarrow N$ be a regular solution of (\ref{eq:hmheateq}) with $|\nabla u| \leq c<\infty$ and $\int_{\mathbb{R}^m_t} |\nabla u|^2 \Phi_{x_0,t_0} <\infty$ for all $t\in I$ with $t<t_0$. Then 
\begin{equation}
\frac{d}{dt} \int_{\mathbb{R}^m_t} |\nabla u|^2 \Phi_{x_0,t_0} = -2 \int_{\mathbb{R}^m_t} \left| \pr_t u - \nabla u \cdot \frac{x-x_0}{2(t_0-t)} \right|^2 \Phi_{x_0,t_0} .
\end{equation}
In particular, $\int_{\mathbb{R}^m_t} |\nabla u|^2 \Phi_{x_0,t_0}$ is non-increasing for $t<t_0$. 
\end{theorem}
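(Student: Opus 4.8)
The plan is to differentiate $\int_{\mathbb{R}^m_t}|\nabla u|^2\,\Phi_{x_0,t_0}$ under the integral sign and split off the two natural pieces, one in which $\pr_t$ lands on the energy density and one in which it lands on the kernel. Throughout I abbreviate $\Phi=\Phi_{x_0,t_0}$ and set $V:=\frac{x-x_0}{2(t_0-t)}$; the two kernel identities I will use are $\frac{D\Phi}{\Phi}=-V$ (exactly as in (\ref{eq:mcfdphi})) and, crucially, $\pr_t\Phi+\Lap\Phi=-\frac{1}{t_0-t}\Phi$. The latter is where the precise exponent $\frac{m-2}{2}$ in $\Phi$ enters, and it plays the same role as the scaling constraint on $r(s)$ in Lemma \ref{lem:mindf}. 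For the energy piece I write $\pr_t|\nabla u|^2=2u^\alpha_{,i}(\pr_t u^\alpha)_{,i}$, integrate by parts in $x$, and substitute $\Lap u^\alpha=\pr_t u^\alpha-(A_u(\nabla u,\nabla u))^\alpha$ from (\ref{eq:hmheateq}). The essential structural input is that $\pr_t u$ is tangent to $N$ while $A_u(\nabla u,\nabla u)$ is normal to it (cf.\ the orthogonality preceding (\ref{eq:Aortho})), so the nonlinear term drops out of $\langle\Lap u,\pr_t u\rangle$ and I obtain
\[
\int_{\mathbb{R}^m_t}\pr_t|\nabla u|^2\,\Phi=-2\int_{\mathbb{R}^m_t}|\pr_t u|^2\,\Phi+2\int_{\mathbb{R}^m_t}\langle \pr_t u,\nabla u\cdot V\rangle\,\Phi .
\]

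It then remains to show that the kernel piece satisfies
\[
\int_{\mathbb{R}^m_t}|\nabla u|^2\,\pr_t\Phi=2\int_{\mathbb{R}^m_t}\langle \pr_t u,\nabla u\cdot V\rangle\,\Phi-2\int_{\mathbb{R}^m_t}|\nabla u\cdot V|^2\,\Phi ,
\]
since adding this to the previous display and completing the square yields exactly $-2\int_{\mathbb{R}^m_t}|\pr_t u-\nabla u\cdot V|^2\Phi$. I would verify this identity by computing both sides. The left-hand side is immediate from the explicit form of $\Phi$: since $\pr_t\Phi=(\frac{m-2}{2(t_0-t)}-|V|^2)\Phi$, it equals $\int_{\mathbb{R}^m_t}|\nabla u|^2\bigl(\frac{m-2}{2(t_0-t)}-|V|^2\bigr)\Phi$. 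For the right-hand side I again use the equation and the normality of $A_u(\nabla u,\nabla u)$ to replace $\langle\pr_t u,\nabla u\cdot V\rangle$ by $\Lap u^\alpha\,u^\alpha_{,i}V_i$, then integrate by parts against the drift $V$, whose gradient $V_{i,j}=\frac{\delta_{ij}}{2(t_0-t)}$ and divergence $\frac{m}{2(t_0-t)}$ are explicit, reusing $\frac{D\Phi}{\Phi}=-V$. Both sides reduce to $\int_{\mathbb{R}^m_t}|\nabla u|^2\bigl(\frac{m-2}{2(t_0-t)}-|V|^2\bigr)\Phi$, the constants matching precisely because of the exponent $\frac{m-2}{2}$; this is the only place where the tuning of $\Phi$ is used.

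The computations above are formal on noncompact $\mathbb{R}^m$, and the main technical obstacle is to justify the integrations by parts and the differentiation under the integral, i.e.\ to control the boundary contributions at infinity. I would handle this exactly as in the proof of Theorem \ref{thm:ocmmcf}: insert a spatial cutoff $\chi_R$ with $\chi_R=1$ on $B_R$, supported in $B_{2R}$, and satisfying $R|D\chi_R|+R^2|D^2\chi_R|\le C_0$, carry out the argument for $\chi_R\Phi$, and estimate the resulting error terms. These errors are supported on $B_{2R}\setminus B_R$ and are controlled using the gradient bound $|\nabla u|\le c$ and the Gaussian decay of $\Phi$; they vanish as $R\to\infty$ by the finiteness assumption $\int_{\mathbb{R}^m_t}|\nabla u|^2\Phi<\infty$, just as in \cite{struwe} and \cite[Theorem 4.13]{ecker}. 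Once the identity holds, monotonicity for $t<t_0$ is immediate, since the right-hand side $-2\int_{\mathbb{R}^m_t}|\pr_t u-\nabla u\cdot V|^2\Phi$ is manifestly nonpositive.
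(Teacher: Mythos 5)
Your proof is correct and takes essentially the same route as the paper's: it reproduces the proof of the moving-centre Theorem \ref{thm:ocmhmheat} specialised to $y'\equiv 0$, namely differentiation under the integral, integration by parts using (\ref{eq:hmheateq}) together with the orthogonality of $A_u(\nabla u,\nabla u)$ to $T_uN$, and the Gaussian-weighted Pohozaev identity coming from the vector field $2\langle Y\cdot\nabla u,\nabla u\rangle-|\nabla u|^2\,Y$ with $Y=\Phi V$, followed by completing the square. Your kernel identity (both sides equalling $\int|\nabla u|^2\bigl(\tfrac{m-2}{2(t_0-t)}-|V|^2\bigr)\Phi$) checks out, and your cutoff justification via $\chi_R$ matches the paper's treatment in the proof of Theorem \ref{thm:ocmmcf}.
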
 

Allowing the centre $x_0$ to move in time, we are able to prove the following moving-centre monotonicity for the harmonic map heat flow:

\begin{theorem}
\label{thm:ocmhmheat}
Fix $t_0\in I$ and a smooth curve $y=y(t)$ in $\mathbb{R}^m$. 
Let $u:\mathbb{R}^m\times I\rightarrow N$ be a regular solution of (\ref{eq:hmheateq}) with $|\nabla u| \leq c<\infty$ and $\int_{\mathbb{R}^m_t} |\nabla u|^2 \Phi_{y(t),t_0} <\infty$ for all $t$. Then 
\begin{equation}
\frac{d}{dt} \int_{\mathbb{R}^m_t} |\nabla u|^2 \Phi_{y(t),t_0} = -2 \int_{\mathbb{R}^m_t} \left| \pr_t u - \nabla u \cdot \frac{x-y - (t_0-t)y'}{2(t_0-t)} \right|^2 \Phi_{y,t_0} + \frac{1}{2} \int_{\mathbb{R}^m_t} |\nabla u \cdot y'|^2 \Phi_{y,t_0}.
\end{equation}
In particular,  $\exp\left(-\frac{1}{2} \int_t^{t_0} |y'(\tau)|^2 d\tau\right)\int_{\mathbb{R}^m_t} |\nabla u|^2 \Phi_{y(t),t_0}$ is non-increasing for $t<t_0$.
\end{theorem}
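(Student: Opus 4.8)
The plan is to differentiate directly under the integral and reduce everything to the backward kernel together with a single Rellich--Pohozaev identity, in close parallel with the proof of Theorem~\ref{thm:ocmmcf}. Since the domain $\mathbb{R}^m$ does not move,
\begin{equation}
\frac{d}{dt}\int_{\mathbb{R}^m_t}|\nabla u|^2\Phi_{y,t_0} = \int_{\mathbb{R}^m_t}\bigl(\pr_t|\nabla u|^2\bigr)\Phi_{y,t_0} + \int_{\mathbb{R}^m_t}|\nabla u|^2\,\frac{d}{dt}\Phi_{y(t),t_0},
\end{equation}
and the only genuinely new ingredient compared with Struwe's fixed-centre computation \cite{struwe} is the total time derivative of the kernel along the moving centre. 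Using $\frac{D\Phi_{y,t_0}}{\Phi_{y,t_0}}=-\frac{x-y}{2(t_0-t)}$, the chain rule gives $\frac{d}{dt}\Phi_{y(t),t_0}=\pr_t\Phi_{y,t_0}+\frac{\langle x-y,y'\rangle}{2(t_0-t)}\Phi_{y,t_0}$, where $\pr_t$ freezes the centre; the extra summand $\frac{\langle x-y,y'\rangle}{2(t_0-t)}\Phi_{y,t_0}$ is the source of all the $y'$ terms below.

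First I would treat the term $\int(\pr_t|\nabla u|^2)\Phi$ using the flow equation \eqref{eq:hmheateq}. Setting $V_i=\langle\nabla_i u,\pr_t u\rangle$ and integrating by parts, $\pr_t|\nabla u|^2=2\langle\nabla u,\nabla\pr_t u\rangle=2\div V-2\langle\Delta u,\pr_t u\rangle$; since $A_u(\nabla u,\nabla u)$ is pointwise orthogonal to $T_uN$ while $\pr_t u\in T_uN$ (cf.\ \eqref{eq:Aortho}), we get $\langle\Delta u,\pr_t u\rangle=|\pr_t u|^2$. Integrating the divergence against $\Phi$ then gives $-2\int|\pr_t u|^2\Phi+\frac{1}{t_0-t}\int\langle\nabla u\cdot(x-y),\pr_t u\rangle\Phi$.

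The heart of the matter is the fixed-centre part of the second integral, which after expanding $\pr_t\Phi$ contains the terms $\int|\nabla u|^2\Phi\bigl(\tfrac{m-2}{2(t_0-t)}-\tfrac{|x-y|^2}{4(t_0-t)^2}\bigr)$. To handle these I would apply the divergence theorem on $\mathbb{R}^m$ to the vector field $T_{ij}X_i\Phi$, where $T_{ij}=u^\alpha_{,i}u^\alpha_{,j}-\tfrac12|\nabla u|^2\delta_{ij}$ is the stress--energy tensor and $X=x-y-2(t_0-t)y'$ is (up to normalization) the analog of the mean curvature flow vector field \eqref{eq:mcfX}. The two facts that drive this step are $\pr_jT_{ij}=\langle\nabla_i u,\pr_t u\rangle$ (again a consequence of \eqref{eq:hmheateq} and the normality of $A$) and $\tr T=-\tfrac{m-2}{2}|\nabla u|^2$; the resulting Rellich--Pohozaev identity converts the $\tfrac{m-2}{2}$ term into radial- and time-derivative contributions and, crucially, cancels the $|x-y|^2$ term. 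Assembling everything and completing the square to form $-2\bigl|\pr_t u-\nabla u\cdot\tfrac{x-y-(t_0-t)y'}{2(t_0-t)}\bigr|^2$ then produces precisely the stated identity, with the positive remainder $\tfrac12|\nabla u\cdot y'|^2$.

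The step I expect to be the main obstacle is the bookkeeping of the centre-motion terms: the stray cross-terms generated by completing the square (involving $\langle\pr_t u,\nabla u\cdot y'\rangle$ and $\langle\nabla u\cdot(x-y),\nabla u\cdot y'\rangle$) do \emph{not} cancel the leftover $\frac{1}{2(t_0-t)}\int|\nabla u|^2\langle x-y,y'\rangle\Phi$ pointwise, and reconcile only after integration through the $y'$-dependent part of the same Pohozaev identity; verifying that the chosen $X$ closes this cancellation is the crux. A secondary technical point is that $\mathbb{R}^m$ is noncompact, so all integrations by parts must be carried out against a cutoff $\chi_R$ exactly as in the proof of Theorem~\ref{thm:ocmmcf}, using $|\nabla u|\le c$ and $\int|\nabla u|^2\Phi<\infty$ to send the cutoff errors to zero as $R\to\infty$ (cf.\ \cite[Theorem~4.13]{ecker}). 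Finally, the monotone quantity follows from the identity: by Cauchy--Schwarz $|\nabla u\cdot y'|^2\le|y'|^2|\nabla u|^2$, so the remainder is at most $\frac{|y'|^2}{2}\int|\nabla u|^2\Phi$, and multiplying by the integrating factor $\exp\bigl(-\tfrac12\int_t^{t_0}|y'(\tau)|^2\,\d\tau\bigr)$ yields a non-increasing quantity.
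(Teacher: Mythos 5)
Your proposal is correct and follows essentially the same route as the paper: the paper's test vector field $2\langle Y\cdot\nabla u,\nabla u\rangle-|\nabla u|^2 Y$ with $Y=\Phi_{y,t_0}X$ and $X=-\frac{1}{2(t_0-t)}\left(x-y-2(t_0-t)y'\right)$ is exactly (twice) your stress--energy contraction $T_{ij}X_i\Phi$, and the paper likewise splits $\frac{d}{dt}\Phi_{y(t),t_0}$ into the frozen-centre part plus the $\frac{\langle x-y,y'\rangle}{2(t_0-t)}\Phi$ term, integrates by parts using the orthogonality of $A_u(\nabla u,\nabla u)$, completes the square as in (\ref{eq:hmheatsq1}), and cancels the quadratic terms via the same Pohozaev identity before the final completion of the square. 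The only cosmetic difference is that the paper justifies differentiating under the integral and integrating by parts directly from the Gaussian decay and the gradient bound, rather than running the explicit cutoff argument of Theorem \ref{thm:ocmmcf}.
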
 
\begin{proof}
The proof is similar to the proof of Theorem \ref{thm:ocmmcf}. By the exponential decay of the Gaussian weight and the assumed gradient bound, we may differentiate under the integral and integrate by parts freely: First we calculate

\begin{eqnarray}
\label{eq:hmheatddt}
\nonumber \frac{d}{dt} \int_{\mathbb{R}^m_t} |\nabla u|^2 \Phi_{y(t),t_0} &=& \int_{\mathbb{R}^m_t} 2\langle \nabla u,\nabla \pr_t u\rangle \Phi_{y,t_0} \\&& +\int_{\mathbb{R}^m_t}\Phi_{y,t_0} |\nabla u|^2\left( \frac{(x-y)\cdot y'}{2(t_0-t)} -\frac{|x-y|^2}{4(t_0-t)^2}  +\frac{m-2}{2(t_0-t)}\right).  
\end{eqnarray}

Integrating by parts, we have 
\begin{eqnarray}
\label{eq:hmheatibp}
\int_{\mathbb{R}^m_t} 2\langle \nabla u,\nabla \pr_t u\rangle \Phi_{y,t_0} &=& \int_{\mathbb{R}^m_t}\Phi_{y,t_0} \left( -2\langle \Lap u, \pr_t u\rangle + \frac{1}{t_0-t}\langle \pr_t u, (x-y)\cdot \nabla u\rangle\right)\\\nonumber&=& \int_{\mathbb{R}^m_t}\Phi_{y,t_0} \left( -2|\pr_t u|^2 + \frac{1}{t_0-t}\langle \pr_t u, (x-y)\cdot \nabla u\rangle\right),
\end{eqnarray}
where in the second line we used that $A_u(\nabla u,\nabla u)$ is orthogonal to $T_u N$ and hence to $\pr_t u $. 

By completing the square we have that \begin{equation}\label{eq:hmheatsq1} -\frac{|x-y|^2}{4(t_0-t)^2}+\frac{(x-y)\cdot y'}{2(t_0-t)} = -\frac{1}{4(t_0-t)^2}\left( |x-y-(t_0-t)y'|^2 - (t_0-t)^2|y'|^2\right).\end{equation} 

Since $u$ is regular, for a smooth compactly supported vector field $Y$ on $M$, applying the divergence theorem to the vector field $2\langle Y\cdot \nabla u, \nabla u\rangle -|\nabla u|^2 Y$ yields that 
\begin{eqnarray}
\label{eq:hmheatdiv}
 0&=& \int_{\mathbb{R}^m_t}  \left( 2\langle Y\cdot\nabla u,\Lap u\rangle +2 \langle \nabla Y, \nabla u\otimes \nabla u\rangle-|\nabla u|^2\div Y\right)
 \\\nonumber &=& \int_{\mathbb{R}^m_t}  \left( 2\langle Y\cdot\nabla u,\pr_t u\rangle + 2 \langle \nabla Y, \nabla u\otimes \nabla u\rangle -|\nabla u|^2\div Y\right),
\end{eqnarray}
where again we have used that $A_u(\nabla u,\nabla u)$ is orthogonal to $Y\cdot\nabla u$. 

For fixed $t$ we again set $X= -\frac{1}{2(t_0-t)}(x-y-2(t_0-t)y')$, so that by polarisation
\begin{equation}
\label{eq:hmheatdivX}
\div(\Phi_{y,t_0}X) = \Phi_{y,t_0}\left(-\frac{m}{2(t_0-t)} + \frac{|x-y-(t_0-t)y'|^2-(t_0-t)^2|y'|^2}{4(t_0-t)^2}\right),
\end{equation}  
 \begin{eqnarray}
 \nabla_i(\Phi_{y,t_0}X)_j &=& \Phi_{y,t_0}\left( X_{i,j} -  \frac{(x-y)_i X_j}{2(t_0-t)}\right) 
 \\\nonumber &=& \Phi_{y,t_0}\left( -\frac{\delta_{ij}}{2(t_0-t)} +  \frac{(x-y)_i (x-y-2(t_0-t)y')_j}{4(t_0-t)^2}\right). 
 \end{eqnarray}
 
  Contracting the last equation against the symmetric tensor $\nabla u \otimes \nabla u$ and polarising again then gives, for $Y=\Phi_{y,t_0}X$, 
 \begin{equation}
 \langle \nabla Y, \nabla u\otimes \nabla u\rangle = \Phi_{y,t_0} \left( -\frac{|\nabla u |^2}{2(t_0-t)} + \frac{|(x-y-(t_0-t)y')\cdot\nabla u|^2-(t_0-t)^2|y'\cdot\nabla u|^2}{4(t_0-t)^2} \right)  .
 \end{equation}
 
Again the exponential decay means that (\ref{eq:hmheatdiv}) in fact holds for $Y=\Phi_{y,t_0}X$, so subtracting the resulting identity from (\ref{eq:hmheatddt}) and using the calculations above we find that
\begin{eqnarray}
\nonumber \frac{d}{dt} \int_{\mathbb{R}^m_t} |\nabla u|^2 \Phi_{y(t),t_0} &=& \int_{\mathbb{R}^m_t}\Phi_{y,t_0} \left( -2|\pr_t u|^2 + \frac{1}{t_0-t}\langle \pr_t u, (x-y)\cdot \nabla u\rangle\right) \\&&+ \nonumber  \int_{\mathbb{R}^m_t}\Phi_{y,t_0}  \frac{1}{t_0-t}\langle\pr_t u,  (x-y-2(t_0-t)y')\cdot \nabla u\rangle\\\nonumber && -\int_{\mathbb{R}^m_t}\Phi_{y,t_0}\left( \frac{|(x-y-(t_0-t)y')\cdot\nabla u|^2-(t_0-t)^2|y'\cdot\nabla u|^2}{2(t_0-t)^2} \right)
\\ &=& -2 \int_{\mathbb{R}^m_t} \Phi_{y,t_0}\left|\pr_t u-\frac{x-y-(t_0-t)y'}{2(t_0-t)}\cdot\nabla u\right|^2 + \frac{1}{2}\int_{\mathbb{R}^m_t}\Phi_{y,t_0} |y'\cdot\nabla u|^2.
\end{eqnarray}
This concludes the proof.
\end{proof}

\bibliographystyle{plain}
\bibliography{ocmv8}
\end{document}